\documentclass[12pt]{article}
\usepackage[top=1in, bottom=1.5in, left=1in, right=1in]{geometry}
\usepackage{hyperref}
\usepackage{enumerate}
\usepackage{amsmath,amssymb,amsthm,amsfonts}
\usepackage{stackengine} 
\newcommand\oast{\stackMath\mathbin{\stackinset{c}{0ex}{c}{0ex}{\ast}{\bigcirc}}}

\usepackage{graphicx}
\usepackage{tikz}
\usetikzlibrary{graphs, graphs.standard}

\tikzset{
	modal/.style={>=stealth’,shorten >=1pt,shorten <=1pt,auto,node distance=1.5cm,
		semithick},
	world/.style={circle, draw,minimum size=.1cm,fill=gray!15},
	point/.style={circle,draw,inner sep=0.3mm,fill=black},
	circ/.style={circle,draw,inner sep=0.1mm,fill=white},
	reflexive above/.style={->,loop,looseness=7,in=120,out=60},
	reflexive below/.style={->,loop,looseness=7,in=240,out=300},
	reflexive left/.style={->,loop,looseness=7,in=150,out=210},
	reflexive right/.style={->,loop,looseness=7,in=30,out=330}
}

\usetikzlibrary{shapes}
\usetikzlibrary{plotmarks}
\usetikzlibrary{arrows}
\usetikzlibrary{positioning}
\theoremstyle{definition}
\newtheorem{defn}{Definition}[section]

\newtheorem{thm}[defn]{Theorem}

\newtheorem{conjecture}[defn]{Conjecture}
\newtheorem{lem}[defn]{Lemma}

\newtheorem{claim}{Claim}

\title{Adjacent vertex distinguishing total chromatic number of graph products}

 \author{Amitayu Banerjee$^{1}$, J. Geetha$^2$  and K. Somasundaram$^{2*}$}
 \date{}
\date{\small{$^1$Department of Logic, E\"otv\"os Lor\'and University, Budapest, Hungary.}\\
\small{$^2$Department of Mathematics, Amrita School of Physical Sciences, Coimbatore \\ Amrita Vishwa Vidyapeetham, India.\\
 banerjee.amitayu@gmail.com,\{j\_geetha, s\_sundaram\}@cb.amrita.edu}\\
 $^*$Corresonding author.}
\begin{document}
\maketitle

\begin{abstract}
The {\em AVD-total chromatic number} $\chi''_{a}(G)$ of a graph $G$ is the least integer $k$ for which $G$ has a proper total coloring $f$ with $k$ colors such that $C_G(u)\neq C_G(v)$ for every edge $uv\in E(G)$, where $C_G(u)=\{f(u)\}\cup\{f(uw):uw\in E(G)\}$.
The {\em AVD-total coloring conjecture (AVD-TCC)} asserts that $\chi''_{a}(G)\leq \Delta(G)+3$ for every graph $G$, where $\Delta(G)$ is the maximum degree of $G$.
In this paper, we prove the AVD-TCC for certain classes of Cartesian products, lexicographic products, skew products, cover products, comb products, and Indu-Bala products.
\end{abstract}

\noindent\textbf{Keywords:} AVD total coloring conjecture, Cartesian Product, Lexicographic product, skew product, cover product, comb product, Indu-Bala product.\\
\noindent\textbf{MSC:} 05C15; 05C76.

\section{Introduction}

All graphs in this paper are finite, simple, and undirected. 
Let $G$ be a graph with vertex set $V(G)$ and edge set $E(G)$.
A \emph{proper total coloring} of $G$ is a coloring of $V(G)\cup E(G)$ such that any two
elements that are either adjacent or incident are assigned different colors.
The minimum number of colors needed for a proper total coloring of $G$ is called the \emph{total chromatic number} of $G$, denoted by $\chi''(G)$. It is known that $\chi''(G) \geq \Delta(G)+1$, where $\Delta(G)$ is the maximum degree of $G$. 
The following conjecture was independently posed by Behzad \cite{Beh1965} and Vizing \cite{Viz1968}. 

\begin{conjecture}[Total Coloring Conjecture (TCC)]\label{Conjecture 1.1}
For any graph $G$, $\chi''(G) \leq \Delta(G) + 2$.
\end{conjecture}

The {\em chromatic index} of $G$, denoted by $\chi'(G)$, is the least integer $k$ such that $G$ admits a proper edge coloring with $k$ colors.
If $\chi'(G)=\Delta(G)$ then $G$ is called Class 1 and if $\chi'(G)=\Delta(G)+1$ then $G$ is called Class 2. We say
$G$ is
of Type $j$ if $\chi''(G) = \Delta(G) + j$, where $j\in\{1,2\}$. If $G$ satisfies the TCC, then $G$ is called a total colorable graph.
A survey on TCC is given in \cite{GNS2023}.

Given a proper total coloring $f$ of $G$, for any vertex $u$ of $G$, we define 
\begin{enumerate}
    \item $C_G(u)=\{f(u)\}\cup\{f(uw):uw\in E(G)\}$, and 
    \item $D_G(u)=\{f(uw):uw\in E(G)\}$. 
\end{enumerate}
An {\em adjacent vertex distinguishing} (AVD)  {\em total coloring} of $G$ is a proper total coloring $f$ such that $C_G(u)\neq C_G(v)$ for every edge $uv\in E(G)$. The {\em AVD-total chromatic number} of $G$, denoted by $\chi''_{a}(G)$, is the minimum number of colors required for an AVD-total coloring of $G$. It is easy to see that $\chi''_{a}(G) \geq \chi''(G)\geq \Delta(G)+1.$ If  
$G$ has two adjacent vertices of maximum degree, then it is known that $\chi''_{a}(G) \geq \Delta(G)+2.$ Similar to the classifications in total coloring, AVD total coloring is classified into three types. If $\chi''_{a}(G) = \Delta(G)+k$, then $G$ is  said to be AVD-Type $k$ for $k=1,2,3$. The following conjecture was posed by Zhang et al. \cite{ZCYLW2005}.

\begin{conjecture}\label{Conjecture 1.2}(AVD-TCC). For any graph $G$, $\chi''_{a}(G) \leq \Delta(G) +3$.
\end{conjecture}

The Conjecture \ref{Conjecture 1.2} is known to hold for $4$-regular graphs~\cite{PR2014}, graphs with $\Delta(G)\in\{3,4\}$~\cite{Wang2007,Che2008,LLLM2017}, complete graphs and bipartite graphs~\cite{ZCYLW2005}, planar graphs with $\Delta(G)\ge 8$~\cite{CHWY2020,CWW2017, HWWY2019,HW2012, WH2014,WHHW2019}, outerplanar graphs~\cite{WW2010}, hypercubes~\cite{CG2009}, complete equipartite graphs~\cite{LCM2015}, indifference graphs~\cite{PM2010}, split graphs~\cite{VFP2022}, and several other graph classes.\\
In this paper, we prove Conjecture \ref{Conjecture 1.2} for certain classes of graph products.

\subsection{Product Graphs}
The four standard graph products are the Cartesian, direct, strong, and lexicographic products.
The total chromatic number of these graph products were studied in \cite{GK2018, KM2003, SAWW1997, SGV2023}. The survey \cite{GNS2023} gives more details on total coloring for product graphs.

In \cite{BKT2012, TW2015, Wang2017}, AVD total colorings for the 
{\em Cartesian product} $G \square H$, the {\em direct product} $G \times H$, the {\em lexicographic product} $G \circ H$, and the {\em strong product} $G \boxtimes H$ were investigated for several classes of graphs.
In this paper, we investigate the AVD-total chromatic number of some new classes of the Cartesian product, the lexicographic product, and the skew product $G \Delta H$ introduced by Shibata and Kikuchi~\cite{SK2000}.

The vertex set of each of these products is
$V(G)\times V(H)$ and the edge sets are defined as follows:
\begin{align*}
E(G \square H)
&=
\Bigl\{
((g,h),(g',h'))
:
\bigl(g=g' \text{, } hh'\in E(H)\bigr)
\text{ or }
\bigl(gg'\in E(G) \text{, } h=h'\bigr)
\Bigr\},
\\[1mm]
E(G\times H)
&=
\Bigl\{
((g,h),(g',h'))
:
gg'\in E(G)
\text{, }
hh'\in E(H)
\Bigr\},
\\[1mm]
E(G \circ H)
&=
\Bigl\{
((g,h),(g',h'))
:
\bigl(g=g' \text{, } hh'\in E(H)\bigr)
\text{ or }
gg'\in E(G)
\Bigr\},
\\[1mm]
E(G \Delta H)
&=
\Bigl\{
((g,h),(g',h'))
:
\bigl(g=g' \text{,} \ hh'\in E(H)\bigr)
\text{ or }
\bigl(gg'\in E(G) \text{ , } \  hh'\in E(H)\bigr)
\Bigr\}.
\end{align*}
We also study the cover product introduced by Llamas and Bernal~\cite{LB2015}, the comb product introduced by Accardi, Ghorbal, and Obata~\cite{AGO2004}, and the Indu--Bala product~\cite{IB2016}.
The \emph{cover product} of two graphs $G$ and $H$ with fixed vertex covers $VC(G)$ and $VC(H)$ is a graph $G\circledast H$ with vertex set
$
V(G)\cup V(H)
$
and edge set
\[
E(G)\cup E(H)\cup
\bigl\{
\{i,j\}: i\in VC(G),\; j\in VC(H)
\bigr\}.
\]
The \emph{comb product} of $G$ and $H$ \cite{AGO2004, SGV2023} with a distinguished vertex $o\in V(H)$ is the graph $G\triangleright_o H$
obtained by grafting a copy of $H$ at vertex $o$ into each vertex of $G$.
In particular, $G\triangleright_o H$ is a graph with vertex set
$
V(G\triangleright_o H)
=
\{
(g,h): g\in V(G),\; h\in V(H)
\},
$
and edge set
\[
E(G\triangleright_o H)
=
\Bigl\{
((g,h),(g',h'))
:
\bigl(gg'\in E(G) \text{ and } h=h'=o\bigr)
\text{ or }
\bigl(g=g' \text{ and } hh'\in E(H)\bigr)
\Bigr\}.
\]
The \emph{Indu--Bala product} of $G$ and $H$~\cite{IB2016, SGV2023}, denoted by $G\blacktriangledown H$, is obtained from two disjoint copies of the join $G\vee H$ by joining the corresponding vertices belonging to the two copies of $H$.

\subsection{Main Results}
\begin{enumerate}
    \item (Theorem \ref{Theorem 3.1}) The Cartesian product $G\square H$ satisfies the AVD-TCC if $G$ is a bipartite graph and $H$ satisfies $\chi^{''}_{a}(H)\leq \Delta(H)+2$.
    
    \item (Theorem \ref{Theorem 4.3}) The lexicographic product $G\circ H$ satisfies the AVD-TCC if $G$ is a balanced bipartite graph with perfect matching and $H$ is a Type 1 graph. 
    
    \item (Theorem \ref{Theorem 5.1}) If $G$ is a Class 1 graph and $H$ is an AVD-Type 1 graph, then the skew product $G\Delta H$ is 
    an AVD-Type 1 graph. 
    Moreover, if $G$ is a Class 1 graph and $\chi''_{a}(H)\leq \Delta(H)+i$ for $i\in \{1,2,3\}$, then $\chi''_{a}(G\Delta H)\leq \Delta(G\Delta H) +i$.

    \item (Theorem \ref{Theorem 6.1}) If $G$ and $H$ are two connected graphs such that $G$ satisfies TCC and $H$ has AVD-Type 2, then the comb product $G \triangleright_o H$ satisfies the AVD-TCC if $n\leq \deg_{H}(o)+1$ where $n$ is the order of $G$ and $o$ is the distinguished vertex of $G\triangleright_o H$.

    \item (Theorem \ref{Theorem 6.2}) If $G$ and $H$ are connected graphs of AVD-Type 2, then the comb product $G \triangleright_o H$ satisfies the AVD-TCC.

    \item (Theorem \ref{Theorem 7.1}) If $G$ and $H$ are two total colorable connected graphs and $k_{1}$, $k_{2}$ are the vertex covering numbers of $G$ and $H$, respectively, such that 
\begin{center}
    $\Delta(H) \leq \Delta(G)$ and $k_{1}+1 < k_{2}$ or $\Delta(G) \leq \Delta(H)$ and
$k_{2}+1 < k_{1}$, 
\end{center}
then the cover
product $G \oast H$ satisfies the AVD-TCC.

    \item (Theorem \ref{Theorem 8.1}) If $G_1$ and $G_2$ are graphs of order $m$ and $n$ respectively such that
    \begin{enumerate}
        \item $m>n+1$,
        \item $\Delta(G_2)\geq\Delta(G_1)$, and
        \item $G_2$ satisfies the TCC,
    \end{enumerate}
    then the Indu-Bala product $G_1\blacktriangledown G_2$ satisfies the AVD-TCC.
\end{enumerate}

\section{Preliminaries} 

A {\em Latin square} of order $k$ is a $k \times k$ array based on the elements $1, 2,...,k$ such that each element occurs exactly once in each row and exactly once in each column. A Latin square $M = [m_{i,j}]$ of order $k$ is said to be {\em commutative} if $m_{i,j} = m_{j,i}$, for $1 \leq i, j \leq k$. If the rows of $M$ are just cyclic permutations (one shift of the elements to the left) of the previous row, then $M$ is said to be {\em anti-circulant}.

\begin{lem}\label{Lemma 2.1}
{\em If $M = [m_{i,j}]$ is a Latin square where $m_{i,j} \equiv (i + j)-1 \pmod{n}, 1 \leq m_{i,j} \leq n$, for $1 \leq i, j \leq n$, then $M$ is an anti-circulant commutative latin square of order $n$.} 
\end{lem}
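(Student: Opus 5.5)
We verify three properties of $M$ directly from the defining formula $m_{i,j}\equiv i+j-1 \pmod n$ with $1\le m_{i,j}\le n$: that $M$ is a Latin square, that it is commutative, and that it is anti-circulant. Well-definedness is immediate, since each residue class mod $n$ has exactly one representative in $\{1,\dots,n\}$. So every entry lies in $\{1,\dots,n\}$ and $M$ is an $n\times n$ array over the right symbol set.

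For the Latin property, fix a row $i$. If $m_{i,j}=m_{i,j'}$, then $i+j-1\equiv i+j'-1\pmod n$, so $j\equiv j'\pmod n$. Since $1\le j,j'\le n$, this forces $j=j'$. Thus the $n$ entries of row $i$ are pairwise distinct elements of an $n$-element set, so each symbol appears exactly once. The column argument is identical with the roles of $i$ and $j$ swapped. Commutativity is immediate because $i+j-1=j+i-1$, so $m_{i,j}$ and $m_{j,i}$ are the unique representative of the same residue class.

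For the anti-circulant property, we compare row $i+1$ with row $i$ for $1\le i\le n-1$. For $1\le j\le n-1$ we have $m_{i+1,j}\equiv i+j \equiv m_{i,j+1}$, and $m_{i+1,n}\equiv i+n\equiv i\equiv m_{i,1}$. Hence row $i+1$ is $(m_{i,2},m_{i,3},\dots,m_{i,n},m_{i,1})$, which is row $i$ cyclically shifted one place to the left. This matches the definition given just before the lemma.

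There is no real obstacle here. The only point needing care is the wraparound in the last column of the anti-circulant check, where the identity $i+n\equiv i$ is what makes $m_{i,1}$ reappear at the end. The rest is routine modular arithmetic.
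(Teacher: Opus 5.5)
Your proof is correct: row and column distinctness via $j\equiv j'\pmod n$ with $1\le j,j'\le n$, the symmetry of $i+j-1$, and the left-shift identity $m_{i+1,j}=m_{i,j+1}$ with the wraparound $m_{i+1,n}=m_{i,1}$ are exactly what the statement requires. The paper gives no proof of this lemma, so your direct check is the omitted routine argument. One small point: as the lemma is worded, being a Latin square is already a hypothesis, so that part of your proof is not needed, though it does show the hypothesis holds automatically.
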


\begin{lem}\label{Lemma 2.2}
    {\em Fix $n>2$. Let $M = [m_{i,j}]$ be a Latin square where $m_{i,j} \equiv (i + j)-1 \pmod{n}, 1 \leq m_{i,j} \leq n$, for $1 \leq i, j \leq n$. If $k,l<n$, and $M'$ is the submatrix of $M$ consisting of the first $k$ rows and the first $l$ columns of $M$, then 
    \begin{enumerate}
        \item no two rows of $M'$ contain the same set of entries,
        \item no two columns of $M'$ contain the same set of entries.
    \end{enumerate}
    }
\end{lem}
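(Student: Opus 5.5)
The plan is to describe the entries of each row of $M'$ explicitly as a cyclic interval of residues modulo $n$, and then to show that such an interval determines its own starting point. Column $j$ of $M$ is row $j$ of $M$ by commutativity (Lemma~\ref{Lemma 2.1}), so it suffices to prove statement~1. Statement~2 is the same argument with the roles of $k$ and $l$ exchanged, using the first $l$ rows' worth of entries in the first $k$ columns.

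Fix a row index $i$ with $1\le i\le k$. Its entries in $M'$ are $m_{i,1},\dots,m_{i,l}$, which are the residues $i, i+1,\dots,i+l-1 \pmod n$ (represented in $\{1,\dots,n\}$). Call this set $S_i$. Since $1\le l<n$, these $l$ residues are distinct, so $S_i$ is a nonempty proper subset of $\mathbb{Z}_n$ forming a cyclic interval of length $l$ that starts at $i$.

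The key observation is that a nonempty proper cyclic interval $S\subsetneq \mathbb{Z}_n$ has exactly one element $s\in S$ with $s-1\notin S$, namely its starting point. For $S_i$, the element $i$ qualifies, because $i-1\equiv i+n-1$ and $n-1\ge l$ imply $i-1\notin S_i$. Every other element $i+t$ with $1\le t\le l-1$ has its predecessor $i+t-1$ in $S_i$. Hence the starting point $i \pmod n$ is a function of the set $S_i$ alone.

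Now suppose $S_i=S_{i'}$ for two row indices $1\le i,i'\le k$. Then the two sets have the same starting point, so $i\equiv i'\pmod n$. Because $1\le i,i'\le k<n$, this forces $i=i'$. So distinct rows of $M'$ have distinct entry sets.

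The only delicate step is the uniqueness of the starting point. This is exactly where the hypothesis $l<n$ is needed: if $l=n$, every row would be all of $\mathbb{Z}_n$ and the claim would fail. Similarly, $k<n$ is used to pass from congruence mod $n$ to equality of the indices $i$ and $i'$. The standing hypothesis $n>2$ is not essential to the argument beyond guaranteeing the lemma is non-vacuous.
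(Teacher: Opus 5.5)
Your proof is correct, but it takes a different route from the paper's.

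\textbf{The paper's argument.} It argues by contradiction. Assuming $R_p=R_q$ with $p>q$, it locates $m_{p,1}$ in row $q$ at column $j=p-q+1\ge 2$. It then uses the anti-circulant shift $m_{p,s}=m_{q,s+j-1}$ to show that $m_{q,l+1}$ lies in $R_p$; this entry exists because $l<n$. The Latin property keeps $m_{q,l+1}$ out of $R_q$. So the paper exhibits an explicit element of $R_p\setminus R_q$, namely the entry just past the right end of row $q$.

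\textbf{Your argument.} You describe each row set $S_i$ as the cyclic interval $\{i,\dots,i+l-1\}$ in $\mathbb{Z}_n$. You then extract an invariant from this set: the unique element whose predecessor is absent. This invariant recovers $i$ from $S_i$ alone, so you get a direct injectivity proof rather than a contradiction.

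\textbf{What each approach buys.} Your version makes clear which hypothesis does what. Statement~1 only needs $1\le l<n$, and via commutativity statement~2 only needs $k<n$. In fact your remark that $k<n$ is needed to pass from $i\equiv i' \pmod n$ to $i=i'$ is slightly overstated: $i,i'\in\{1,\dots,n\}$ already suffices. The paper's argument stays in the language of Latin squares. It uses only the shift structure and the no-repeat property of a single row, never describing the row sets explicitly, but it leaves the column case to ``similarly.''
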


\begin{proof}
(1). 
Let $R_i=\{m_{i,1},...,m_{i,l}\}$ be the $i^{th}$ row of $M'$ for some $1\leq i\leq k$.
For the sake of contradiction, assume that 
$R_p=R_{q}$ where $p> q$ and $1\leq p,q\leq k$.  
For the entry $m_{p,1}$ to appear in $R_q$, there must exist some index $j \in \{1, \dots, l\}$ such that
$m_{p,1}=m_{q,j}$, i.e.,
$
(p+1)-1 \equiv (q + j)-1 \pmod{n}.
$
It follows that
$p+1 \equiv q + j \pmod{n}$,
and hence
$j \equiv p - q + 1 \pmod{n}$.
Since $1 \le p, q \le k<n$, we obtain $1 < p - q + 1 \le k< n$, which implies
$
j = p - q + 1.
$
By Lemma \ref{Lemma 2.1}, $M$ is anticirculant.
Consequently, 
$m_{p,1}=m_{q,j},m_{p,2}=m_{q,j+1}, \dots,\; m_{p,(l+1)-(j-1)}=m_{q,l+1}$. 
We note that $j=p-q+1\ge2$ since $p>q$. 
Thus, $(l+1)-(j-1)\le l$, and hence $m_{q,l+1}\in\{m_{p,1},\dots,m_{p,l}\}=R_{p}$.
However, by the definition of a latin square,
$
m_{q,l+1}\notin\{m_{q,1},\dots,m_{q,l}\}=R_q
$, which contradicts the assumption that $R_p=R_q$.

Similarly, we can prove (2).
\end{proof}

\begin{thm}[K\H{o}nig's Theorem]\label{Theorem 2.3}
{\em If $G$ is a bipartite graph, then $\chi'(G) = \Delta(G)$.}
\end{thm}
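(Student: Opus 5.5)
König's edge-coloring theorem (Theorem \ref{Theorem 2.3}) is a classical result, and the paper states it only for use in later sections. The proof I would give is the standard alternating-path argument, by induction on the number of edges. The lower bound $\chi'(G)\geq\Delta(G)$ is trivial, since the edges at a vertex of maximum degree need pairwise distinct colors. So the whole task is to show that the edges of a bipartite graph $G$ can be properly colored from a palette $\{1,\dots,\Delta\}$, where $\Delta=\Delta(G)$.

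Remove an edge $uv$ and color $G-uv$ properly with at most $\Delta$ colors by induction. Adding $uv$ back, $u$ and $v$ each see at most $\Delta-1$ colored edges. Hence some color $\alpha$ is missing at $u$, and some color $\beta$ is missing at $v$. If one color is missing at both, assign it to $uv$ and stop. Otherwise we may assume that $\alpha$ appears at $v$ and $\beta$ appears at $u$.

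Consider the maximal path $P$ starting at $v$ whose edges alternate colors $\alpha,\beta,\alpha,\dots$. It is a path because each vertex has at most one edge of each color. The main step is to show that $P$ does not reach $u$. Since $G$ is bipartite with $u,v$ in opposite parts, $P$ could only reach $u$ after an odd number of edges. Its edges are colored $\alpha$ at odd positions, so $P$ would enter $u$ through an $\alpha$-edge. That is impossible, because $\alpha$ is missing at $u$.

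Now swap $\alpha$ and $\beta$ along $P$. The coloring stays proper: internal vertices of $P$ keep one edge of each color, and the endpoints of $P$ are maximal, so they lacked the color being introduced. After the swap, $\alpha$ is missing at $v$ and is still missing at $u$, so we color $uv$ with $\alpha$. This completes the induction.

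The only delicate point is the parity argument excluding $u$ from $P$. That is exactly where bipartiteness is used.
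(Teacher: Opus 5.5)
The paper gives no proof of this statement; it quotes K\H{o}nig's edge-coloring theorem as a classical tool, so there is no paper argument to compare against. Your alternating-path induction is the standard proof, and it is correct, including the parity step that rules out $u\in P$.

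One small tightening. The fact that each vertex has at most one edge of each color only shows that the $\alpha\beta$-subgraph has maximum degree $2$, so its components could be cycles. What makes the component of $v$ a path with $v$ as an endpoint is that $v$ has an $\alpha$-edge but no $\beta$-edge. That same fact, rather than maximality, is also why the swap causes no conflict at $v$.
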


\section{Cartesian Product}

\begin{thm}\label{Theorem 3.1}
    {\em Let $G$ be a bipartite graph and $H$ be a graph such that $\chi^{''}_{a}(H)\leq \Delta(H)+2$. Then, $G\square H$ satisfies the AVDTCC.}
\end{thm}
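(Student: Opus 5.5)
We use the AVD-total coloring of $H$ on each $H$-layer and color the $G$-edges by a proper edge coloring of $G$ with fresh colors. Put $\Delta(G\square H)=\Delta(G)+\Delta(H)$. Let $f$ be an AVD-total coloring of $H$ with colors $\{1,\dots,\Delta(H)+2\}$; it exists by hypothesis. By K\H{o}nig's Theorem (Theorem \ref{Theorem 2.3}), $G$ has a proper edge coloring $\varphi$ with colors $\{\Delta(H)+3,\dots,\Delta(H)+2+\Delta(G)\}$, disjoint from the colors of $f$. Define a total coloring $c$ of $G\square H$ by
\[ c(g,h)=f(h),\qquad c\bigl((g,h)(g,h')\bigr)=f(hh'),\qquad c\bigl((g,h)(g',h)\bigr)=\varphi(gg'). \]
This uses $\Delta(G)+\Delta(H)+2\le \Delta(G\square H)+3$ colors. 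Properness is immediate. Within a fiber $\{g\}\times H$ we simply copy $f$. The $G$-edges at a vertex get distinct colors from $\varphi$, and those colors never meet the colors of $f$.

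For the AVD condition we note $C(g,h)=C_H(h)\cup\varphi(E_G(g))$, where $E_G(g)$ is the set of edges of $G$ at $g$. The two parts lie in disjoint palettes, so $C(u)=C(v)$ forces equality of both parts. For an $H$-edge $(g,h)(g,h')$ the $H$-parts are $C_H(h)\ne C_H(h')$, since $f$ is AVD. So these vertices are distinguished.

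The $G$-edges $(g,h)(g',h)$ are the main obstacle. Here the $H$-parts coincide, so we must have $\varphi(E_G(g))\ne\varphi(E_G(g'))$. This fails, for instance, when $\deg_G(g)=\deg_G(g')=\Delta(G)$, since then both sets are the full palette of $\varphi$. The first fix is to use the bipartition $V(G)=A\cup B$ and modify the coloring on $B$-fibers. On $\{g\}\times H$ with $g\in B$, replace $f$ by $\pi\circ f$, where $\pi$ is the transposition exchanging a color of $f$ with an extra color $\Delta(G)+\Delta(H)+3$. This extra color is allowed, since the total budget is $\Delta(G)+\Delta(H)+3$.

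Concretely, on $A$-fibers we use $f$ and on $B$-fibers we use $f'=\pi\circ f$, with $\pi$ swapping color $1$ and the new color $\ast$. Then $C(g,h)$ contains $1$ or $\ast$ according to the side, but only when $1\in C_H(h)$. To handle vertices $h$ with $1\notin C_H(h)$, we must choose the swapped color to belong to every $C_H(h)$. That is impossible in general, so we argue on the $G$-parts instead.

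Take a $G$-edge with $g\in A$, $g'\in B$. If $1\notin C_H(h)$, the $H$-parts are both $C_H(h)$, and we need $\varphi(E_G(g))\ne\varphi(E_G(g'))$ whenever $h$ misses color $1$. To arrange this, use a second transposition $\sigma$, swapping $\ast$ and a $\varphi$-color, applied to the $G$-edges... This becomes delicate. A cleaner route is the following. Since $f$ has $\Delta(H)+2$ colors and $|C_H(h)|\le\Delta(H)+1$, every $h$ misses some color. We choose the fiber recoloring as a cyclic shift $f'=f+1$ modulo $\Delta(H)+2$ on $B$-fibers, which keeps within the palette, and then compare. Now $C_H(h)+1=C_H(h)$ would require $C_H(h)$ to be empty or the full set, neither of which happens. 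So the $H$-parts $C_H(h)$ and $C_H(h)+1$ differ, and the $G$-edge endpoints are distinguished. This gives an AVD-total coloring with $\Delta(G)+\Delta(H)+2$ colors, which proves the theorem. The step to verify carefully is that the shifted coloring on $B$-fibers is still an AVD-total coloring of $H$, which is clear because shifting is a relabeling of colors.
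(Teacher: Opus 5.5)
Your final construction is correct and follows the paper's skeleton. Copy an AVD-total coloring of $H$ on one side of the bipartition, use a shifted copy on the other side, and color each $G$-edge class with a single fresh color via K\H{o}nig's theorem. The one real difference is the shift. The paper recolors the $Y$-fibers with the non-cyclic map $s$, which sends $a^0_{\Delta(H)+2}$ to an extra color $a^0_{\Delta(H)+3}$, and then proves $s(C_H(z))\neq C_H(z)$ by the index-chasing in Claim~\ref{Claim 2}. You use the cyclic shift $x\mapsto x+1$ modulo $\Delta(H)+2$ inside the original palette. Your invariance argument is shorter: a shift-invariant subset of $\mathbb{Z}_{\Delta(H)+2}$ is empty or everything, while $1\le |C_H(h)|\le \Delta(H)+1$. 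It also saves the extra color. So you actually get $\chi''_{a}(G\square H)\le \Delta(G\square H)+2$, which is stronger than the theorem. It is consistent with Theorem~\ref{Theorem 4.1} when $H=K_1$.

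There are two things to fix in the write-up.

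First, your claim that properness is immediate is false for your initial coloring. The vertices $(g,h)$ and $(g',h)$ are adjacent and both receive $f(h)$. You never re-check vertex properness for the final coloring. It holds because bipartiteness puts every $G$-edge between an $A$-fiber and a $B$-fiber, and $f(h)+1\not\equiv f(h) \pmod{\Delta(H)+2}$. You must state this; the paper does so explicitly via $s(a^0_i)\neq a^0_i$. This is the step needing care. The one you flag is trivial: a relabeled AVD coloring is still AVD.

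Second, delete the abandoned transposition attempts. The proof should consist only of:
\begin{enumerate}
\item the cyclic-shift construction;
\item the properness check (vertex colors across $G$-edges, disjoint palettes for fiber and $G$-edges, and $\varphi$ proper);
\item the two AVD cases.
\end{enumerate}
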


\begin{proof}
Let $G$ be a bipartite graph with bipartitions $X=\{x_1,...,x_n\}$ and $Y=\{y_1,...,y_m\}$
and $H_{v_i}$ be the copy of $H$ with respect to the vertex $v_i\in V(G)$. We note that $\Delta(G\square H)=\Delta(G)+\Delta(H)$. We divide $\Delta(G)+\Delta(H)+3$ colors into the following disjoint sets:
\begin{align*}
C_0 &= \{a_1^0, a_2^0, \dots, a_{\Delta(H)+2}^0\}, \\
C_i &= \{a_i^1\}, 1\leq i\leq \Delta(G),\\
C_{\Delta(G)+1} &= \{a_{\Delta(H)+3}^0\}.
\end{align*}

\begin{figure}[ht]
    \centering
    \begin{tikzpicture}[scale=1.2]
        
        \draw[thick, rounded corners=12pt]
            (-2.2,-0.6) rectangle (-1.6,1.5);
        \draw[thick, rounded corners=12pt]
            (1.6,-0.6) rectangle (2.2,1.5);

        \node at (-1.9,1.8) {$H_{x_k}$};
        \node at (1.9,1.8) {$H_{y_l}$};

        \fill (-1.9,1.2) circle (2pt) coordinate (L1);
        \fill (-1.9,0.8) circle (2pt) coordinate (L2);
        \fill (-1.9,-0.2) circle (2pt) coordinate (L3);
        \node at (-1.9,0.4) {$\vdots$};

        \fill (1.9,1.2) circle (2pt) coordinate (R1);
        \fill (1.9,0.8) circle (2pt) coordinate (R2);
        \fill (1.9,-0.2) circle (2pt) coordinate (R3);
        \node at (1.9,0.4) {$\vdots$};

        \draw[thick] (L1) -- (R1);
        \draw[thick] (L2) -- (R2);
        \draw[thick] (L3) -- (R3);

        \begin{scope}[yshift=-2.1cm]

            \draw[thick, rounded corners=14pt]
                (-4.3,-0.2) rectangle (4.3,1.2);
            \node at (4.7,0.5) {$G$};

            \draw[thick]
                (-2.0,0.5) to[out=30,in=150] (2.0,0.5);

            \draw[rounded corners=10pt]
                (-3.8,0.05) rectangle (-0.2,0.95);
            \node at (-4.1,0.5) {$X$};

            \fill (-3.4,0.5) circle (1.5pt);
            \node[below=2pt] at (-3.4,0.5) {$x_1$};

            \node at (-2.7,0.5) {$\ldots$};

            \fill (-2.0,0.5) circle (1.5pt);
            \node[below=2pt] at (-2.0,0.5) {$x_k$};

            \node at (-1.3,0.5) {$\ldots$};

            \fill (-0.6,0.5) circle (1.5pt);
            \node[below=2pt] at (-0.6,0.5) {$x_n$};


            \draw[rounded corners=10pt]
                (0.2,0.05) rectangle (3.8,0.95);
            \node at (4.05,0.5) {$Y$};

            \fill (0.6,0.5) circle (1.5pt);
            \node[below=2pt] at (0.6,0.5) {$y_1$};

            \node at (1.3,0.5) {$\ldots$};

            \fill (2.0,0.5) circle (1.5pt);
            \node[below=2pt] at (2.0,0.5) {$y_l$};

            \node at (2.7,0.5) {$\ldots$};

            \fill (3.4,0.5) circle (1.5pt);
            \node[below=2pt] at (3.4,0.5) {$y_m$};

        \end{scope}

    \end{tikzpicture}
    \caption{Edges between $H_{x_k}$ and $H_{y_l}$ in $G\square H$
    if $x_k y_l\in E(G)$.}
    \label{Figure 1}
\end{figure}

We define a total coloring $c:V(G\Box H)\cup E(G\Box H)\rightarrow \bigcup\limits_{i=0}^{\Delta(G)+1} C_i$ of $G\Box H$ as follows:

\begin{enumerate}
    \item For every $x_i\in X$, we consider an AVD total coloring of $H_{x_i}$ with colors from $C_0$ so that the corresponding elements of $H_{x_i}$ receive the same colors. In particular, let
$
\varphi:V(H)\cup E(H)\rightarrow C_0
$
be an AVD total coloring of \(H\). This is possible since we assumed $\chi''_{a}(H)\leq \Delta(H)+2$. For every \(x_k\in X\), define
$c((x_k,x))=\varphi(x)$ for all $x\in V(H)$,
and
$c((x_k,x)(x_k,y))=\varphi(xy)$ for all $xy\in E(H)$.   


\item By Theorem~\ref{Theorem 2.3}, there exists a proper edge coloring $\theta:E(G)\rightarrow \{1,...,\Delta(G)\}$ of $G$.
    For every edge $x_ky_l\in E(G)$ with $\theta(x_ky_l)=i$, 
    let $\phi_{kl}: E(B(H_{x_k},H_{y_l})) \rightarrow C_i$ be a proper edge coloring of $B(H_{x_k},H_{y_l})$ with the color $a^1_i\in C_i$, where $B(H_{x_k},H_{y_l})$ is the bipartite subgraph graph of $G\square H$ with bipartitions $H_{x_k}$ and $H_{y_l}$ (see Fig. \ref{Figure 1}). Such a coloring exists since the edges of $B(H_{x_k},H_{y_l})$ is a perfect matching and
\begin{center}
$
\chi'(B(H_{x_k},H_{y_l}))
=
\Delta(B(H_{x_k},H_{y_l}))
=
1
=
|C_i|
$
\end{center}
by Theorem~\ref{Theorem 2.3}.
We define $c(e)=\phi_{kl}(e)$ for every edge $e\in E(B(H_{x_k},H_{y_l}))$.


\item Fix $y_i\in Y$. Define a function $s:C_0\rightarrow C_0\backslash \{a^0_1\}\cup C_{\Delta(G)+1}$ by $s(a^0_i)=a^{0}_{i+1}\in  C_0\backslash \{a^0_1\}$ for each $1\leq i\leq \Delta(H)+1$ and $s(a^{0}_{\Delta(H)+2})=a_{\Delta(H)+3}^0\in C_{\Delta(G)+1}$.
Let $\varphi':V(H)\cup E(H)\rightarrow \Delta(H)+2$ be an AVD total coloring of $H$ where 
\begin{center}
$\varphi'(x)=s(\varphi(x))$     
\end{center}
for any $x\in V(H)\cup E(H)$.
For every $y_k\in Y$, define
$c((y_k,x))=\varphi'(x)$ for all $x\in V(H)$,
and
$c((y_k,x)(y_k,y))=\varphi'(xy)$ for all $xy\in E(H)$.  
\end{enumerate}

\begin{claim}\label{Claim 1}
{\em The coloring $c$ is a proper total coloring of $G\square H$.}
\end{claim}

\textit{Proof:}
Every edge joining two distinct copies of $H$ receives a color from
$
\bigcup\limits_{t=1}^{\Delta(G)} C_t,
$
whereas every vertex and every edge within a copy of $H$ receives a color from $C_0\cup C_{\Delta(G)+1}$. Since the color sets $C_1,\ldots,C_{\Delta(G)}$ are disjoint from $C_0\cup C_{\Delta(G)+1}$, every edge joining distinct copies of $H$ receives a color different from every color used on the vertices and edges within those copies.
Furthermore, since $\theta$ is a proper edge coloring of $G$, whenever the edges $v_kv_l$ and $v_rv_s$ are adjacent in $G$, the corresponding bipartite graphs $B(H_{v_k},H_{v_l})$ and $B(H_{v_r},H_{v_s})$ are colored from distinct color sets. Hence, any two adjacent edges in $G\square H$ that join distinct copies of $H$ receive different colors.
Since $s(a^{0}_{i})\neq a^{0}_{i}$ for any $1\leq i\leq \Delta(H)+2$, we have $c(p)\neq c(q)$ if $p\in V(H_{x_i}), q\in V(H_{y_j})$ and $pq\in E(G\square H)$.  
In particular, if $p=(x_{i},z)$ and $q=(y_{j},z)$ for some $z\in H$, then \[c(p)=\varphi(z)\neq s(\varphi(z))=\varphi'(z)=c(q).\]
Moreover, for each edge $v_kv_l\in E(G)$, the coloring
$
\phi_{kl}:E(B(H_{v_k},H_{v_l}))\rightarrow C_{\theta(v_kv_l)}
$
is a proper edge coloring of $B(H_{v_k},H_{v_l})$, and each copy $H_{v_k}$ is properly AVD total colored. Therefore, $c$ is a proper total coloring of $G\square H$. 

\begin{claim}\label{Claim 2}
{\em The coloring $c$ is an AVD-total coloring of $G\square H$.}
\end{claim}

\textit{Proof:}
Pick an edge $xy\in E(G\square H)$. We show that $C_{G\square H}(x)\neq C_{G\square H}(y)$.  

\textbf{Case (i).} $x,y\in H_{x_i}$ for some $x_i$.
Since $H_{x_i}$ is AVD total colored, we obtain $C_{H_{x_{i}}}(x)\neq C_{H_{x_{i}}}(y)$. Furthermore, $C_{H_{x_{i}}}(x), C_{H_{x_{i}}}(y)\subseteq C_0$, $(C_{G\square H}(x)\backslash C_{H_{x_{i}}}(x)) \cap C_0 =\emptyset$ and $(C_{G\square H}(y)\backslash C_{H_{x_{i}}}(y)) \cap C_0 =\emptyset$.
Consequently, $C_{G\square H}(x)\neq C_{G\square H}(y)$.   

\textbf{Case (ii).} $x,y\in H_{y_i}$ for some $y_i$. Similar to Case (i), we can see that 
$C_{G\square H}(x)\neq C_{G\square H}(y)$.

\noindent
\textbf{Case (iii).} Let $x\in H_{x_i}$ and $y\in H_{y_j}$, where $x_iy_j\in E(G)$. Since the colors assigned to the edges connecting different $H_{x}$'s belong to $\bigcup\limits_{i=1}^{\Delta(G)}C_i$, which is disjoint from $C_0\cup C_{\Delta(G)+1}$, it is sufficient to show that
\[
C_{H_{x_i}}(x)\neq C_{H_{y_j}}(y).
\]
For the sake of contradiction, assume that
$
C_{H_{x_i}}(x)=C_{H_{y_j}}(y)
$.
Let $C_{H_{x_i}}(x)=\{a^0_{p_1},...,a^0_{p_l}\}$ 
and 
$C_{H_{y_j}}(y)=\{a^0_{q_1},...,a^0_{q_l}\}$ be the enumerations of $C_{H_{x_i}}(x)$ and $C_{H_{y_j}}(y)$ such that $p_1<p_2<...<p_l$ and $q_1<q_2<...<q_l$. 
If $p_1=1$, then $a^0_1=a^0_{p_1}\in C_{H_{x_i}}(x)$. However, the map $s$ does not map any color of $C_0$ to $a^0_1$. Hence,
$
a^0_1\notin C_{H_{y_j}}(y),
$
contradicting the assumption that
$
C_{H_{x_i}}(x)=C_{H_{y_j}}(y).
$
Therefore, $p_1>1$. 
Since the total coloring of $H_{y_j}$ is obtained by applying the
map $s$, the color $s(a^0_{p_1})$ belongs to
$C_{H_{y_j}}(y)$. In particular,
\[
s(a^0_{p_1})=
\begin{cases}
a^0_{p_1+1},&p_1<\Delta(H)+2,\\
a^0_{\Delta(H)+3},&p_1=\Delta(H)+2.
\end{cases}
\]
Since $C_{H_{x_i}}(x)=C_{H_{y_j}}(y)$, we have either $a^0_{p_1+1}\in C_{H_{x_i}}(x)$ or $a^0_{\Delta(H)+3}\in C_{H_{x_i}}(x)$. The latter case is impossible because 
$C_{H_{x_i}}(x)\subseteq C_0$,
so we assume $a^0_{p_1+1}\in C_{H_{x_i}}(x)$.

\textbf{Subcase (i):} $p_1+1=q_k$ for some $k>1$. Since $a^{0}_{q_1}\in C_{H_{y_j}}(y)$, there must exist some $a^{0}_{p_t}\in C_{H_{x_i}}(x)$ such that $p_t+1=q_1$. Since $q_1<q_k$, we obtain $p_t<p_1$ which contradicts the fact that $p_1<p_i$ for all $i\in\{2,...,l\}$. 
    \vspace{2mm}

\textbf{Subcase (ii):} $p_1+1=q_1$. Since $a^{0}_{q_1}\in C_{H_{y_j}}(y)=C_{H_{x_i}}(x)$, we have $p_2=q_1$.
    If $p_2+1= q_k$ for some $k>2$, then by the arguments similar to Subcase (i) we obtain a contradiction. Thus, $p_2+1=q_2$ and $p_3=q_2$. Continuing in this way, we obtain $p_{i+1}=q_i$ for each $2\leq i\leq l-1$.
    We note that
\[
q_l=
\begin{cases}
p_{l}+1,& p_l<\Delta(H)+2,\\
\Delta(H)+3,& p_l=\Delta(H)+2.
\end{cases}
\]
In either case, $q_l>p_l$, and hence $q_l\neq p_t$ for every $1\le t\le l$, contradicting the assumption
$
C_{H_{x_i}}(x)=C_{H_{y_j}}(y).
$
\end{proof}

\section{Direct and Lexicographic Products}

\begin{thm}\cite[Lemma 1.5]{TW2015}\label{Theorem 4.1}
{\em If $G$ is a bipartite graph, then $\chi''_{a}(G)\leq \Delta(G)+2$.}  
\end{thm}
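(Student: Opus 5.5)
The plan is to build the coloring directly. Let $G$ have bipartition $(X,Y)$ and put $\Delta=\Delta(G)$. By Theorem~\ref{Theorem 2.3} there is a proper edge coloring $\theta:E(G)\to\{1,\dots,\Delta\}$. We add two colors $\Delta+1$ and $\Delta+2$, color every vertex of $X$ with $\Delta+1$, and color every vertex of $Y$ with $\Delta+2$.

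\textbf{Properness.} Edge colors lie in $\{1,\dots,\Delta\}$ and vertex colors in $\{\Delta+1,\Delta+2\}$, so no edge clashes with an endpoint. Adjacent vertices lie on opposite sides and get different colors. Adjacent edges get different colors because $\theta$ is proper. Hence this is a proper total coloring with $\Delta+2$ colors.

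\textbf{The AVD condition.} Take an edge $uv$ with $u\in X$ and $v\in Y$. Then
\[
C_G(u)=\{\Delta+1\}\cup D_G(u), \qquad C_G(v)=\{\Delta+2\}\cup D_G(v),
\]
and both $D_G(u)$ and $D_G(v)$ are contained in $\{1,\dots,\Delta\}$. So $\Delta+1\in C_G(u)$ but $\Delta+1\notin C_G(v)$, which gives $C_G(u)\neq C_G(v)$.

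There is no real obstacle here. The one point to check is that the two reserved vertex colors never occur on edges, and this holds because K\H{o}nig's theorem lets all edges be colored with only $\Delta$ colors. The bound therefore follows immediately, and this is the argument behind \cite[Lemma 1.5]{TW2015}.
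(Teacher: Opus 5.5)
Your proof is correct and complete: Theorem~\ref{Theorem 2.3} puts every edge color in $\{1,\dots,\Delta\}$, so the vertex color $\Delta+1$ of the $X$-endpoint lies in $C_G(u)$ but not in $C_G(v)$, and this settles the AVD condition for every edge. The paper gives no proof here and simply cites \cite[Lemma 1.5]{TW2015}; your argument, an edge coloring with $\Delta$ colors plus one new vertex color for each side of the bipartition, is the standard proof of that fact.
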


We know that the direct product $G\times H$ is bipartite if either $G$ or $H$ is a bipartite graph, so Theorem \ref{Theorem 4.2} follows from Theorem \ref{Theorem 4.1}.

\begin{thm}\label{Theorem 4.2}
    {\em Let $G$ be a bipartite graph and $H$ be any connected graph of order $n\geq 3$. Then $\chi''_{a}(G\times H)\leq \Delta(G\times H)+2$.}
\end{thm}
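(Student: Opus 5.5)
The plan is to reduce Theorem \ref{Theorem 4.2} directly to Theorem \ref{Theorem 4.1}. To do this, I only need to show that $G\times H$ is bipartite; the bound $\chi''_{a}(G\times H)\leq \Delta(G\times H)+2$ is then exactly the conclusion of Theorem \ref{Theorem 4.1} applied to the graph $G\times H$. The hypotheses that $H$ is connected and has order $n\geq 3$ are not needed for the inequality itself. They only ensure that the product is a nontrivial graph, with edges as soon as $G$ has an edge, so that the statement is not vacuous.

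\textbf{Step 1: a bipartition of $G\times H$.} Let $X\cup Y$ be a bipartition of $G$ and define
\[
V_1=X\times V(H),\qquad V_2=Y\times V(H).
\]
These two sets partition $V(G\times H)$. Now take an edge $(g,h)(g',h')$ of $G\times H$. By the definition of the direct product, $gg'\in E(G)$, so exactly one of $g,g'$ lies in $X$ and the other lies in $Y$. Hence every edge of $G\times H$ joins $V_1$ to $V_2$, and both $V_1$ and $V_2$ are independent sets. This shows that $G\times H$ is bipartite.

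\textbf{Step 2: apply the earlier result.} Invoke Theorem \ref{Theorem 4.1} with the bipartite graph $G\times H$ in place of $G$. This gives $\chi''_{a}(G\times H)\leq \Delta(G\times H)+2$, which is the claim. Theorem \ref{Theorem 4.1} places no restriction on connectivity or order, so disconnected components (for instance, when $H$ is itself bipartite, $G\times H$ may split into pieces) cause no difficulty.

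\textbf{Possible obstacle.} There is essentially none beyond checking the bipartition. The only point requiring care is that the direct product, unlike the Cartesian product, has no edges between vertices with the same first coordinate, so no edge can stay inside $V_1$ or inside $V_2$. The definition of $E(G\times H)$ requires $gg'\in E(G)$ for every edge, and this is exactly what makes Step 1 work.
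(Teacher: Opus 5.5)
Your proposal is correct and matches the paper's argument exactly: the paper also observes that $G\times H$ is bipartite whenever $G$ is, and then invokes Theorem \ref{Theorem 4.1}. Your explicit bipartition $X\times V(H)$, $Y\times V(H)$ simply supplies the one-line justification the paper leaves implicit, and you are right that the hypotheses on $H$ play no role in the inequality.
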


\begin{thm}\label{Theorem 4.3}
    {\em Let $G$ be a balanced bipartite graph with a perfect matching and $H$ be a Type 1 nontrivial graph. Then $G\circ H$ satisfies the AVDTCC.}
\end{thm}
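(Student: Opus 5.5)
The plan is to build an explicit total coloring with $\Delta(G\circ H)+3$ colors in blocks, following the proof of Theorem~\ref{Theorem 3.1}. Write $|V(H)|=m\ge 2$. The degree of $(g,h)$ is $\deg_G(g)\,m+\deg_H(h)$, so $\Delta(G\circ H)=\Delta(G)m+\Delta(H)$.

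\textbf{Setup.} Let $X=\{x_1,\dots,x_n\}$, $Y=\{y_1,\dots,y_n\}$ be the bipartition of $G$ and let $M=\{x_iy_i\}$ be the perfect matching. By Theorem~\ref{Theorem 2.3}, $G-M$ has a proper edge coloring with $\Delta(G)-1$ colors. For every non-matching edge $x_ky_l$ of color $t$, color the complete bipartite graph $K_{m,m}$ between $H_{x_k}$ and $H_{y_l}$ properly with a private block $B_t$ of $m$ colors, using the Latin square of Lemma~\ref{Lemma 2.1}. Every vertex of either copy then sees all of $B_t$.

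\textbf{Matching edges.} For each $x_iy_i\in M$, color $K_{m,m}$ with the $m\times m$ upper-left submatrix $M'$ of the Latin square of order $n'=m+2$ from Lemma~\ref{Lemma 2.2}, using a block $B_0$ of $m+2$ colors. Vertex $(x_i,h_p)$ sees row $p$ of $M'$ and vertex $(y_i,h_q)$ sees column $q$. By Lemma~\ref{Lemma 2.2}, distinct vertices of one copy see distinct $B_0$-sets, and each misses exactly two colors of $B_0$.

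\textbf{Edges inside copies.} The edges of each copy of $H$ are colored with a palette $P$ of $\Delta(H)+1$ colors. The edge part of a Type~1 total coloring of $H$ suffices.

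\textbf{Budget and cross-copy comparisons.} The count is $(\Delta(G)-1)m+(m+2)+(\Delta(H)+1)=\Delta(G\circ H)+3$. The key point is the comparison of $C(u)$ and $C(v)$:
\begin{itemize}
\item If $u,v$ lie in the same copy, their $B_t$-parts agree, but their $B_0$-parts are distinct rows (or distinct columns) of $M'$. So $C(u)\neq C(v)$ as long as vertex colors do not refill the missing $B_0$ colors in a bad way.
\item If $u\in H_{x_k}$ and $v\in H_{y_l}$ with $\deg(u)\ne\deg(v)$, there is nothing to prove.
\item Otherwise I would compare the missing sets. For $u=(x_i,h_p)$ the $B_0$ colors missing at $u$ are $m_{p,m+1},m_{p,m+2}$. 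For $v=(y_i,h_q)$ they are $m_{m+1,q},m_{m+2,q}$. I would exploit the anti-circulant structure to show these pairs never coincide for adjacent $u,v$, choosing the shift of columns relative to rows if necessary.
\end{itemize}

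\textbf{Main obstacle: vertex colors.} In $G\circ H$ every vertex of $H_{x_i}$ is adjacent to every vertex of $H_{y_j}$ whenever $x_iy_j\in E(G)$. Hence the vertex colors on the $X$-side copies and the $Y$-side copies must be disjoint on each connected component, and no spare block of size $m$ exists for this. I would try first to give each vertex a color missing at it from $B_0\cup P$:
\begin{itemize}
\item on $X$-copies, use the vertex colors of the Type~1 total coloring of $H$, which lie in $P$ and avoid the incident $P$-edges;
\item on $Y$-copies, use one of the two $B_0$-colors missing at that vertex, chosen so that it is not a color appearing on the $X$-side of the same matching edge, and so that same-copy distinctness survives (adding a missing color still leaves distinct sets, since the rows are distinct and each misses two colors).
\end{itemize}
Checking that this choice is simultaneously proper across non-matching neighbors and preserves the AVD property on matching pairs is the step I expect to require the most care. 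If it fails, the fallback is to permute the Latin square rows on the $Y$-side by a cyclic shift, as in the map $s$ of Theorem~\ref{Theorem 3.1}.
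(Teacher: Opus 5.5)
Your construction has the same skeleton as the paper's: the same color budget, a Latin square of order $m+2$ on each matching block (the two extra colors playing the role of the paper's $t_0,t_1$), a Type~1 coloring inside the copies, and vertex colors on one side taken from the Latin square. But there is a genuine gap. You leave open exactly the two steps that make the construction work, and the arguments you sketch for them are false.

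\textbf{Vertex colors of the $Y$-copies.} It is not true that adding an arbitrary missing color to distinct columns keeps them distinct. With $m_{i,j}\equiv i+j-1 \pmod{m+2}$, column $q$ of $M'$ is $\{q,\dots,q+m-1\}$. Adding $m_{m+1,q}\equiv q+m$ to column $q$, and adding $m_{m+2,q+1}\equiv q$ to column $q+1$, both give $\{q,\dots,q+m\}$. Mixed choices can also give two $H$-adjacent vertices the same color, since $m_{m+1,q}=m_{m+2,q-1}$. The choice must be uniform: color $(y_i,h_q)$ with $m_{m+1,q}$ for every $q$. This amounts to using the first $m+1$ rows and first $m$ columns, exactly as the paper does. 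Row $m+1$ has distinct entries, so the vertex coloring is proper inside each copy. Lemma~\ref{Lemma 2.2}(2) then makes the resulting $(m+1)$-sets pairwise distinct. Properness across copies is automatic, because $X$-side vertex colors lie in $P$ and $Y$-side ones lie in $B_0$; the worry you flag there is not needed.

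\textbf{Cross-copy comparison.} Your plan to show the missing pairs never coincide fails, because $M$ is symmetric: $\{m_{p,m+1},m_{p,m+2}\}=\{m_{m+1,p},m_{m+2,p}\}$. The vertices $(x_i,h_p)$ and $(y_i,h_p)$ are adjacent. They have equal degree whenever $\deg_G(x_i)=\deg_G(y_i)$, and they see the same set of $B_0$-colored edges (row $p$ equals column $p$). So no comparison of edge colors can separate them.

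The missing idea is a count that uses the vertex colors:
\begin{itemize}
\item every vertex $u$ of an $X$-copy has $|C(u)\cap B_0|=m$ (its $m$ matching-block edges; its own color lies in $P$);
\item every vertex $v$ of a $Y$-copy has $|C(v)\cap B_0|=m+1$ (its $m$ matching-block edges plus its own color).
\end{itemize}
This settles every adjacent pair lying in different copies at once, whether the connecting edge is a matching edge or not, and no shifting of the Latin square is needed. It is the paper's Case~(i). There it reads $1+k\deg_G(v_j)\neq k\deg_G(v_i)$, using $k\ge 2$, only because the paper lets the matching blocks use $C_{\theta(e)}\cup\{t_0,t_1\}$ with $\theta(e)$ varying. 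Your single block $B_0$ for all matching edges makes the count simpler still.
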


\begin{proof}
We note that $\Delta(G\circ H) =
\Delta(H)+k\Delta(G)$ where $k\geq 2$ is the order of $H$.
Let $V(G)=\{v_1,...,v_m\}$ be an enumeration of the set of vertices of $G$ and $H_{v_i}$ be the copy of $H$ with respect to the vertex $v_i$. 
By assumptions, $m$ is even.
Let 
$X=\{v_i: i\text{ is even}\}$ and $Y=\{v_i: i\text{ is odd}\}$ be the bipartitions of $G$
and
$M=\{v_1v_2, v_3v_4,...,v_{m-1}v_m\}$ be a perfect matching of $G$.  
We divide $\Delta(H) + k\Delta(G) + 3$ colors into the following $\Delta(G) + 2$ disjoint sets:
\begin{align*}
C_0 &= \{a_1^0, a_2^0, \dots, a_{\Delta(H)+1}^0\}, \\
C_i &= \{a_1^i, a_2^i, \dots, a_{k}^i\}, 1\leq i\leq \Delta(G),\\
C_{\Delta(G)+1} &= \{t_0, t_1\}.
\end{align*}

\begin{figure}[ht]
    \centering
    \begin{tikzpicture}[scale=1.2]
        
        \draw[thick, rounded corners=12pt]
            (-2.2,-0.6) rectangle (-1.6,1.5);
        \draw[thick, rounded corners=12pt]
            (1.6,-0.6) rectangle (2.2,1.5);

        \node at (-1.9,1.8) {$H_{v_t}$};
        \node at (1.9,1.8) {$H_{v_l}$};

        \fill (-1.9,1.2) circle (2pt) coordinate (L1);
        \fill (-1.9,0.7) circle (2pt) coordinate (L2);
        \fill (-1.9,-0.2) circle (2pt) coordinate (L3);
        \node at (-1.9,0.4) {$\vdots$};

        \fill (1.9,1.2) circle (2pt) coordinate (R1);
        \fill (1.9,0.7) circle (2pt) coordinate (R2);
        \fill (1.9,-0.2) circle (2pt) coordinate (R3);
        \node at (1.9,0.4) {$\vdots$};

        \draw[thick] (L1) -- (R1);
        \draw[thick] (L1) -- (R2);
        \draw[thick] (L1) -- (R3);
        
        \draw[thick] (L2) -- (R1);
        \draw[thick] (L2) -- (R2);
        \draw[thick] (L2) -- (R3);
        
        \draw[thick] (L3) -- (R1);
        \draw[thick] (L3) -- (R2);
        \draw[thick] (L3) -- (R3);


        \begin{scope}[yshift=-1.5cm]

            \draw[thick, rounded corners=14pt]
                (-4.3,-0.7) rectangle (4.3,0.7);
            \node at (4.7,0) {$G$};

            \draw[thick]
                (-2.0,0) to[out=30,in=150] (2.0,0);


            \draw[rounded corners=10pt]
                (-3.8,-0.45) rectangle (-0.2,0.45);
            \node at (-4.1,0) {$X$};

            \fill (-3.4,0) circle (1.5pt);
            \node[below=2pt] at (-3.4,0) {$v_2$};

            \node at (-2.7,0) {$\ldots$};

            \fill (-2.0,0) circle (1.5pt);
            \node[below=2pt] at (-2.0,0) {$v_t$};

            \node at (-1.3,0) {$\ldots$};

            \fill (-0.6,0) circle (1.5pt);
            \node[below=2pt] at (-0.6,0) {$v_m$};


            \draw[rounded corners=10pt]
                (0.2,-0.45) rectangle (3.8,0.45);
            \node at (4.05,0) {$Y$};

            \fill (0.6,0) circle (1.5pt);
            \node[below=2pt] at (0.6,0) {$v_1$};

            \node at (1.3,0) {$\ldots$};

            \fill (2.0,0) circle (1.5pt);
            \node[below=2pt] at (2.0,0) {$v_l$};

            \node at (2.7,0) {$\ldots$};

            \fill (3.4,0) circle (1.5pt);
            \node[below=2pt] at (3.2,0) {$v_{m-1}$};

        \end{scope}

    \end{tikzpicture}
    \caption{Edges between $H_{v_t}$ and $H_{v_l}$ in $G \circ H$ if $v_t v_l \in E(G)$.}
    \label{Figure 2}
\end{figure}

Let $c:V(G\circ H)\cup E(G\circ H)\rightarrow \bigcup\limits_{i=0}^{\Delta(G)+1} C_i$ be a total coloring of $G\circ H$ defined as follows:

\begin{enumerate}
\item  We consider a proper total coloring of $H_{v_t}$ if $t$ is odd and a proper edge coloring of $H_{v_t}$ if $t$ is even with $\Delta(H)+1$ colors from the color set $C_0$. This is possible since $H_{v_t}$ is a copy of $H$, which is a Type 1 graph.
In particular, let $\varphi:V(H)\cup E(H)\rightarrow C_0$ be a proper total coloring of $H$. 
For every $v_t\in Y$, we define
$c((v_t,x))=\varphi(x)$ for all $x\in V(H)$,
and
$c((v_t,x)(v_t,y))=\varphi(xy)$ for all $xy\in E(H)$. For every $v_t\in X$, we define $c((v_t,x)(v_t,y))=\varphi(xy)$ for all $xy\in E(H)$.

\item Let $\theta:E(G)\rightarrow \{1,...,\Delta(G)\}$ be a proper edge coloring of $G$.
For every edge $v_tv_l\in E(G)\setminus M$ with
$\theta(v_tv_l)=i$, let $\phi_{tl}: E(B(H_{v_t},H_{v_l})) \rightarrow C_i$ be a proper edge coloring of $B(H_{v_t},H_{v_l})$ with the colors from $C_i$, where $B(H_{v_t},H_{v_l})$ is the bipartite subgraph  of $G\circ H$ with bipartitions $H_{v_t}$ and $H_{v_l}$ (see Fig. \ref{Figure 2}). Such a coloring exists since 
\begin{center}
$\chi'(B(H_{v_t},H_{v_l}))=
\Delta(B(H_{v_t},H_{v_l}))=
k=|C_i|$
\end{center}
by Theorem~\ref{Theorem 2.3}.
We define $c(e)=\phi_{tl}(e)$ for every edge $e\in E(B(H_{v_t},H_{v_l}))$.

    \item Suppose $\theta(v_xv_{y})=i$ where $v_xv_y\in M$ and $x<m$ is odd. We note that the vertices of $H_{v_x}$ are already colored in (1). 
    We will color the join edges between $H_{v_x}$ and $H_{v_{y}}$ and the vertices of $H_{v_{y}}$. 
    Consider a latin square $M=[m_{i,j}]$ of order $k+2$  as in Lemma \ref{Lemma 2.2}. 
    We define a bijection $f:\{1,...,k+2\}\rightarrow C_{i}\cup C_{\Delta(G)+1}$ as follows:
\begin{center}
    $f(r)=a_{r}^{i}, 1\leq r\leq k, \qquad f(k+1)=t_0, \qquad f(k+2)=t_1.$
\end{center}
    Let $R$ be a matrix of order $k+2$ whose entries are defined as $r_{i,j}=f(m_{i,j})$ for $1\leq i,j\leq k+2$.
    Let $R'$ be the submatrix of $R$ containing the first $k+1$ rows and the first $k$ columns. 
    Let $H_{v_x}=\{v_x^1,...,v_x^k\}$ and $H_{v_{y}}=\{v_{y}^1,...,v_{y}^k\}$.
    We color the join edges of $H_{v_x}$ and $H_{v_{y}}$ and the vertices of $H_{v_{y}}$ using the entries of $R'$ as follows:
\begin{align*}
c(y)=
\begin{cases}
r_{i,j} & \text{if } y=v_x^{i}v_{y}^j,   1\leq i,j \leq k 
\\[2mm]
r_{k+1,j} & \text{if } y=v_{y}^{j}, 1\leq j \leq k. 
\end{cases}
\end{align*}
\end{enumerate}

\begin{claim}\label{Claim 3}
{\em The coloring $c$ is an AVD-total coloring of $G\circ H$.}
\end{claim}

\textit{Proof.}
By construction, $c$ is a proper total coloring of $G\circ H$.
Let $uv\in E(G\circ H)$. By a careful introspection, we will analyze the following cases:

\textbf{Case (i).}
Let $u=(v_i,a)\in H_{v_i}$ and $v=(v_j,b)\in H_{v_j}$ where $i$ and $j$ have different parity for some $a,b\in V(H)$.
Without loss of generality, assume that $i$ is odd and $j$ is even. Then 
\begin{center}
    $|C_{G\circ H}(v)\cap (\bigcup\limits_{t=1}^{\Delta(G)+1} C_t)|= 
    1 + k(\deg_{G}(v_j)) \quad \text{and} \quad
|C_{G\circ H}(u)\cap (\bigcup\limits_{t=1}^{\Delta(G)+1}C_t)|=k(\deg_{G}(v_i))$.
\end{center}

Since $k\geq 2$, we have $1 + k(\deg_{G}(v_j))\neq k(\deg_{G}(v_i))$. Thus,
$C_{G\circ H}(u)\neq C_{G\circ H}(v)$.

\textbf{Case (ii).} Suppose $u,v\in H_{v_i}$ where $i$ is odd. 
Let $B(H_{v_i},H_{v_{i+1}})$ be the bipartite graph with bipartitions $H_{v_i}$ and $H_{v_{i+1}}$. 
Let 
\[D_{B(H_{v_i},H_{v_{i+1}})}(x)=\{c(xy):xy\in E(B(H_{v_i},H_{v_{i+1}}))\}\]
be the set of colors on the incident edges of $x$ in $B(H_{v_i},H_{v_{i+1}})$. 
By Lemma \ref{Lemma 2.2}, we have $D_{B(H_{v_i},H_{v_{i+1}})}(u)\neq D_{B(H_{v_i},H_{v_{i+1}})}(v)$.
Moreover, $D_{B(H_{v_i},H_{v_{i+1}})}(u), D_{B(H_{v_i},H_{v_{i+1}})}(v)\subseteq C_{j}\cup C_{\Delta(G)+1}$ if $\theta(v_iv_{i+1})=j$. 
Finally, we have
$
C_{G\circ H}(u)\neq C_{G \circ H}(v)
$
since for $x\in\{u,v\}$,
\begin{center}
$
(C_{G\circ H}(x)\setminus D_{B(H_{v_i},H_{v_{i+1}})}(x))\cap(C_j\cup C_{\Delta(G)+1})=\emptyset.
$    
\end{center}

\textbf{Case (iii).} Let $u,v\in H_{v_i}$ where $i$ is even. Similar to Case (ii), we obtain 
$C_{G\circ H}(u)\neq C_{G\circ H}(v)$.
\end{proof}


\section{Skew Product}

\begin{thm}\label{Theorem 5.1}
    {\em Let $G$ be a Class 1 graph and $\chi''_{a}(H)\leq \Delta(H)+k$ for $k\in \{1,2,3\}$. Then $\chi''_{a}(G\Delta H)\leq \Delta(G\Delta H) +k$.}
\end{thm}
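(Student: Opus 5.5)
The plan is to build the coloring copy by copy, as in the proof of Theorem~\ref{Theorem 3.1}, except that every copy of $H$ receives the \emph{same} AVD total coloring. This works because in the skew product a cross edge $(g,h)(g',h')$ always has $hh'\in E(H)$, so its two endpoints project to adjacent vertices of $H$.

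\textbf{Degree count and color budget.} In $G\Delta H$ the vertex $(g,h)$ has $\deg_H(h)$ neighbours inside its own copy $H_g$. It also has $\deg_H(h)$ neighbours in each copy $H_{g'}$ with $gg'\in E(G)$. Hence
\[
\deg(g,h)=\deg_H(h)\bigl(1+\deg_G(g)\bigr), \qquad \Delta(G\Delta H)=\Delta(H)\bigl(1+\Delta(G)\bigr).
\]
So $\Delta(H)+k+\Delta(G)\Delta(H)$ colors are available. I split them into disjoint sets:
\begin{itemize}
\item $C_0$, with $|C_0|=\Delta(H)+k$;
\item $C_1,\dots,C_{\Delta(G)}$, each of size $\Delta(H)$.
\end{itemize}
(If $\Delta(H)=0$ the product is edgeless and there is nothing to prove.)

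\textbf{The coloring.}
\begin{enumerate}
\item Fix an AVD total coloring $\varphi$ of $H$ with colors from $C_0$; it exists by hypothesis. Define $c((g,x))=\varphi(x)$ and $c((g,x)(g,y))=\varphi(xy)$ for every $g\in V(G)$.
\item Since $G$ is Class 1, take a proper edge coloring $\theta:E(G)\to\{1,\dots,\Delta(G)\}$.
\item For an edge $gg'$ with $\theta(gg')=i$, let $B_{gg'}$ be the subgraph of cross edges between $H_g$ and $H_{g'}$. It is isomorphic to the direct product $H\times K_2$. This graph is bipartite, and its vertex $(g,h)$ has degree $\deg_H(h)$, so $\Delta(B_{gg'})=\Delta(H)$.
\item By Theorem~\ref{Theorem 2.3}, $B_{gg'}$ has a proper edge coloring with the $\Delta(H)$ colors of $C_i$; use it for the cross edges.
\end{enumerate}

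\textbf{Properness.} Inside each copy the coloring is $\varphi$, which is proper. Cross edges use colors from $C_1\cup\dots\cup C_{\Delta(G)}$, which is disjoint from $C_0$. Two cross edges at the same vertex $(g,h)$ either lie in the same $B_{gg'}$, where the König coloring separates them, or lie in $B_{gg'}$ and $B_{gg''}$ with $g'\neq g''$. In the latter case they use the different sets $C_{\theta(gg')}\neq C_{\theta(gg'')}$, because $\theta$ is proper. Finally, the two endpoints of a cross edge $(g,h)(g',h')$ have colors $\varphi(h)\neq\varphi(h')$, since $hh'\in E(H)$.

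\textbf{AVD property.} For every vertex, $C_{G\Delta H}((g,h))\cap C_0=C_H(h)$, computed with respect to $\varphi$. Take any edge of $G\Delta H$; in both cases below its endpoints are $(g,h)$ and $(g'',h')$ (with $g''=g$ for an edge inside a copy) where $hh'\in E(H)$.
\begin{itemize}
\item An edge inside a copy is $(g,h)(g,h')$, so $hh'\in E(H)$ directly.
\item A cross edge $(g,h)(g',h')$ has $hh'\in E(H)$ by the definition of the skew product.
\end{itemize}
In either case $C_H(h)\neq C_H(h')$ because $\varphi$ is AVD. So the $C_0$-parts of the two color sets already differ, and therefore the full sets differ.

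\textbf{Where the care is needed.} I expect the only point needing attention to be the identification of $B_{gg'}$ as a bipartite graph of maximum degree $\Delta(H)$, namely the bipartite double cover $H\times K_2$. This is what lets the cross edges fit into exactly $\Delta(G)\Delta(H)$ extra colors, and hence keeps the total at $\Delta(G\Delta H)+k$. When $k=1$, the same count shows that $G\Delta H$ is AVD-Type~1.
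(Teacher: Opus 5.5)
Your proposal is correct and follows the paper's construction: every copy of $H$ gets the same AVD total coloring from $C_0$, and a Class~1 edge coloring $\theta$ of $G$ assigns each cross-edge bipartite graph its own palette $C_i$ of size $\Delta(H)$, which K\H{o}nig's theorem then fills (you also justify $\Delta(B_{gg'})=\Delta(H)$ via $B_{gg'}\cong H\times K_2$, where the paper only asserts it). Your AVD argument, $C_{G\Delta H}((g,h))\cap C_0=C_H(h)\neq C_H(h')$, is the paper's Cases (i) and (ii) phrased through the projection to $H$ instead of the auxiliary vertex $x'\in H_{v_j}$.
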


\begin{proof}
     Fix $k\in \{1,2,3\}$. The maximum degree of $G\Delta H$ is $\Delta(G\Delta H)=\Delta(H)+\Delta(G)\Delta(H)$. 
    We divide $\Delta(H) + \Delta(H)\Delta(G) + k$ colors into the following $\Delta(G) + 1$ disjoint sets:
\begin{align*}
C_0 &= \{a_1^0, a_2^0, \dots, a_{\Delta(H)+k}^0\}, \\
C_i &= \{a_1^i, a_2^i, \dots, a_{\Delta(H)}^i\}, 1\leq i\leq \Delta(G).
\end{align*}

\begin{figure}[ht]
    \centering
    \begin{tikzpicture}[scale=1.2]
        
        \draw[thick, rounded corners=12pt] (-3.5,-1.8) rectangle (-2.9,1.5);
        \node at (-3.2,1.8) {$P_n$};
        
        \node at (-3.2,1.2) {$\vdots$};
        \fill (-3.2,0.5)  circle (2pt) coordinate (P1);
        \fill (-3.2,-0.1) circle (2pt) coordinate (P2);
        \fill (-3.2,-0.7) circle (2pt) coordinate (P3);
        \fill (-3.2,-1.3) circle (2pt) coordinate (P4);
        
        \draw[thick] (P1) to[bend left=30] (P2);
        \draw[thick] (P2) to[bend right=30] (P3);
        \draw[thick] (P3) to[bend left=30] (P4);

        \draw[thick, rounded corners=12pt] (-1.1,-1.8) rectangle (-0.5,1.5);
        \node at (-0.8,1.8) {$(P_n)_{v_k}$};
        
        \node at (-0.8,1.2) {$\vdots$};
        \fill (-0.8,0.5)  circle (2pt) coordinate (L1);
        \fill (-0.8,-0.1) circle (2pt) coordinate (L2);
        \fill (-0.8,-0.7) circle (2pt) coordinate (L3);
        \fill (-0.8,-1.3) circle (2pt) coordinate (L4);
        
        \draw[thick] (L1) to[bend left=30] (L2);
        \draw[thick] (L2) to[bend right=30] (L3);
        \draw[thick] (L3) to[bend left=30] (L4);

        \draw[thick, rounded corners=12pt] (1.3,-1.8) rectangle (1.9,1.5);
        \node at (1.6,1.8) {$(P_n)_{v_\ell}$};
        
        \node at (1.6,1.2) {$\vdots$};
        \fill (1.6,0.5)  circle (2pt) coordinate (R1);
        \fill (1.6,-0.1) circle (2pt) coordinate (R2);
        \fill (1.6,-0.7) circle (2pt) coordinate (R3);
        \fill (1.6,-1.3) circle (2pt) coordinate (R4);
        
        \draw[thick] (R1) to[bend left=30] (R2);
        \draw[thick] (R2) to[bend right=30] (R3);
        \draw[thick] (R3) to[bend left=30] (R4);

        \draw[thick] (L1) -- (R2);
        \draw[thick] (L2) -- (R1);
        
        \draw[thick] (L2) -- (R3);
        \draw[thick] (L3) -- (R2);
        
        \draw[thick] (L3) -- (R4);
        \draw[thick] (L4) -- (R3);


        \begin{scope}[xshift=0.7cm, yshift=-2.7cm]

            \draw[thick, rounded corners=14pt] (-4.3,-0.6) rectangle (3.9,0.5);
            \node at (4.1,0) {$G$};

            \draw[rounded corners=10pt] (-3.8,-0.5) rectangle (3.4,0.4);

            \fill (-3.3,0) circle (1.5pt);
            \node[below=2pt] at (-3.3,0) {$v_1$};

            \node at (-2.5,0) {$\ldots$};

            \fill (-1.5,0) circle (1.5pt) coordinate (vk);
            \node[below=2pt] at (-1.5,0) {$v_k$};

            \fill (0.8,0) circle (1.5pt) coordinate (vl);
            \node[below=2pt] at (0.8,0) {$v_\ell$};

            \node at (1.8,0) {$\ldots$};

            \fill (2.8,0) circle (1.5pt);
            \node[below=2pt] at (2.8,0) {$v_m$};

            \draw[thick] (vk) -- (vl);

        \end{scope}

    \end{tikzpicture}
    \caption{Let $H=P_n$. Edges between $H_{v_k}=(P_n)_{v_k}$ and $H_{v_\ell}=(P_n)_{v_\ell}$ in $G \triangle P_n$ are displayed if $v_k v_\ell \in E(G)$.}
    \label{Figure 3}
\end{figure}

Let $H_{v_i}$ be the copy of the graph $H$ with respect to the vertex $v_i\in V(G)$.
We define a total coloring $c:V(G\Delta H)\cup E(G\Delta H)\rightarrow 
\bigcup\limits_{t=0}^{\Delta(G)} C_t$ of $G\Delta H$ as follows:
\begin{enumerate}
\item For each $v_r\in V(G)$, we consider an AVD total coloring of $H_{v_r}$ with colors from $C_0$ so that the corresponding elements of $H_{v_r}$ receive the same colors. In particular, let
$
\varphi:V(H)\cup E(H)\rightarrow C_0
$
be an AVD total coloring of \(H\). For every \(v_r\in V(G)\), define
$c((v_r,x))=\varphi(x)$ for all $x\in V(H)$,
and
$c((v_r,x)(v_r,y))=\varphi(xy)$ for all $xy\in E(H)$.

\item Since $G$ is a Class 1 graph, there exists a proper edge coloring $\theta:E(G)\rightarrow \{1,...,\Delta(G)\}$ of $G$.
For every edge $v_kv_l\in E(G)$ with $\theta(v_kv_l)=i$, 
let $\phi_{kl}: E(B(H_{v_k},H_{v_l})) \rightarrow C_i$ be a proper edge coloring of $B(H_{v_k},H_{v_l})$ with colors from $C_i$, where $B(H_{v_k},H_{v_l})$ is the bipartite graph with bipartitions $H_{v_k}$ and $H_{v_l}$ (see Fig. \ref{Figure 3} when $H$ is the path graph $P_n$). Such a coloring exists by Theorem \ref{Theorem 2.3} since
\begin{center}
$
\Delta(B(H_{v_k},H_{v_l}))
=
\Delta(H)
=
|C_i|.
$
\end{center}
We define $c(e)=\phi_{kl}(e)$ for every $e\in E(B(H_{v_k},H_{v_l}))$.
\end{enumerate}

\begin{claim}\label{Claim 4}
{\em The coloring $c$ is an AVD total coloring of $G\Delta H$.}
\end{claim}

\textit{Proof:}
By construction, $c$ is a proper total coloring of $G\Delta H$.
It suffices to show that for any $xy\in E(G\Delta H)$,
$
C_{G\Delta H}(x)\neq C_{G\Delta H}(y)
$. 
Let $xy\in E(G\Delta H)$.

\textbf{Case (i).} 
Let $x=(v_i,a)\in V(H_{v_i})$ and $y=(v_i,b)\in V(H_{v_i})$ for some $a,b\in V(H)$ and $v_{i} \in V(G)$.
Then $C_{H_{v_i}}(x)\neq C_{H_{v_i}}(y)$
as we considered an AVD-total coloring of $H_{v_i}$ in (1).
Since
$
C_{H_{v_i}}(x),\; C_{H_{v_i}}(y)\subseteq C_0,
$
and
\[
C_{G\Delta H}(x)\setminus C_{H_{v_i}}(x),\;
C_{G\Delta H}(y)\setminus C_{H_{v_i}}(y)
\subseteq \bigcup_{t=1}^{\Delta(G)} C_t,
\]
where
$
\left(\bigcup_{t=1}^{\Delta(G)} C_t\right)\cap C_0=\emptyset,
$
it follows that
$
C_{G\Delta H}(x)\neq C_{G\Delta H}(y).
$

\textbf{Case (ii).} 
Let $x=(v_i,a)\in V(H_{v_i})$ and $y=(v_j,b)\in V(H_{v_j})$ for some $a,b\in V(H)$ and $v_i,v_j\in V(G)$ such that $i\neq j$.
Let
$x'=(v_j,a)$
be the vertex of $H_{v_j}$ corresponding to $x$. Since the elements of every copy of $H$ is colored identically,
$
C_{H_{v_i}}(x)=C_{H_{v_j}}(x').
$
However, by the definition of skew product, $x'$ and $y$ are adjacent in $H_{v_j}$ and $H_{v_j}$ is AVD-total colored. Thus, we have
$
C_{H_{v_j}}(x')\neq C_{H_{v_j}}(y).
$
Hence,
$
C_{H_{v_i}}(x)\neq C_{H_{v_j}}(y).
$
Applying the same argument as in Case (i), we conclude that $C_{G\Delta H}(x)\neq C_{G\Delta H}(y)$.
\end{proof}

\section{Comb Product}

\begin{thm}\label{Theorem 6.1}
{\em Let $G$ and $H$ be two connected graphs such that $G$ satisfies the TCC and $H$ has AVD-Type 2. 
If $n\leq \deg_{H}(o)+1$ where $n$ is the order of $G$ and $o$ is the distinguished vertex of $G\triangleright_o H$, 
then $G\triangleright_o H$ satisfies the AVDTCC.}
\end{thm}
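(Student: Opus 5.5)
The plan is to build the coloring copy by copy: color the $n$ copies of $H$ by relabelled versions of one AVD total coloring of $H$, and color the $G$-layer $\{(g,o):g\in V(G)\}$ from a separately reserved palette.

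\textbf{Degree count.} Write $d=\deg_H(o)$. The maximum degree is
\[
\Delta:=\Delta(G\triangleright_o H)=\max\{\Delta(H),\,\Delta(G)+d\}.
\]
Since $G$ has order $n$, we have $\Delta(G)\le n-1\le d$ by the hypothesis $n\le d+1$. We work with a palette $S$ of $\Delta+3$ colors. Note that $|S|\ge \Delta(H)+3$ and $|S|\ge \Delta(G)+d+3$.

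\textbf{Coloring the $G$-layer.} Since $G$ is a spanning subgraph of $K_n$, a total coloring of $K_n$ with at most $n+1$ colors restricts to a total coloring $\psi$ of $G$. We choose it from a subset $T\subseteq S$ with $|T|\le n+1\le d+2$, in which all $n$ vertices of $G$ receive pairwise distinct colors. This is where the hypothesis $n\le d+1$ is used: it lets us afford distinct vertex colors on the $G$-layer. Then we fix an AVD total coloring $\varphi$ of $H$ with palette $P$, $|P|=\Delta(H)+2$, which exists since $H$ is AVD-Type 2.

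\textbf{Coloring the copies of $H$.} For each $g\in V(G)$ we choose an injection $\sigma_g:P\to S$ subject to two conditions:
\[
\sigma_g(\varphi(o))=\psi(g), \qquad \sigma_g(D_H(o))\subseteq S\setminus T.
\]
The second condition is feasible because $|S\setminus T|\ge \Delta+3-(d+2)\ge d+1\ge |D_H(o)|$. We then color $H_g$ by $\sigma_g\circ\varphi$, and color the edges $(g,o)(g',o)$ by $\psi(gg')$.

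\textbf{Properness.} This is routine. Inside a copy, $\sigma_g$ is injective and $\varphi$ is proper. At $(g,o)$, the $H$-edges use colors in $S\setminus T$, while the $G$-edges and the vertex color use colors in $T$. Along a $G$-edge, the endpoints $(g,o),(g',o)$ receive $\psi(g)\ne\psi(g')$. No other element of $H_g$ is incident or adjacent to an element of another copy.

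\textbf{AVD condition.} We split into three cases.
\begin{enumerate}
\item \emph{Edges inside $H_g$ avoiding $(g,o)$.} Both color sets are $\sigma_g$-images of distinct sets $C_H(\cdot)$, and $\sigma_g$ is injective, so the sets differ.
\item \emph{Edges $(g,o)(g,h)$.} Here $|C(g,o)|=d+\deg_G(g)+1$. Since $G$ is connected and $n\ge 2$, this set contains at least two colors of $T$: the color $\psi(g)$ and at least one $G$-edge color. We want to arrange that $C(g,h)$ meets $T$ in at most one color. For this we try to choose the remaining values of $\sigma_g$ (on $P\setminus C_H(o)$) in $S\setminus T$ as far as room allows. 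When room runs out, we instead compare cardinalities or use that $\sigma_g^{-1}$ of a $T$-color occurring at $(g,h)$ would contradict $C_H(o)\ne C_H(h)$.
\item \emph{Edges $(g,o)(g',o)$.} Both sets contain the same $H$-part only if $\sigma_g(D_H(o))=\sigma_{g'}(D_H(o))$. To rule this out, we choose the images $\sigma_g(D_H(o))\subseteq S\setminus T$ so that adjacent vertices of $G$ get different $d$-subsets. This is possible when $|S\setminus T|\ge d+1$, since then there are at least $d+1\ge n$ distinct $d$-subsets, so even all $g$ can receive different ones.
\end{enumerate}

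\textbf{Main obstacle.} I expect the hardest step to be case 2, namely keeping $C(g,o)\ne C(g,h)$ when $\Delta(H)$ is much larger than $d$. In that situation $S\setminus T$ is too small to hold all of $\sigma_g(P)$, so some non-$o$ colors of $H_g$ must land in $T$. The fallback I would try first is a counting argument on $|C\cap T|$ together with the freedom to permute which $T$-colors are used inside $H_g$. If needed, I would also exploit that $\psi$ can be chosen so that $\psi(g)$ is absent from the edge colors at $g$.
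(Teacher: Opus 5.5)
There is a genuine gap, and it shows up already in the set-up. Your count $|S\setminus T|\ge \Delta+3-(d+2)\ge d+1$ is equivalent to $\Delta\ge 2d$. But you only know $\Delta\ge\Delta(G)+d$, which gives $\Delta\ge 2d$ only if $\Delta(G)\ge d$ (or $\Delta(H)\ge 2d$), and you have just shown $\Delta(G)\le d$.

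Requiring all $n$ vertices of the $G$-layer to get distinct colours forces $|T|\ge n$. Together with $\sigma_g(D_H(o))\subseteq S\setminus T$, this needs $\Delta+3\ge n+d$. In the case $\Delta=\Delta(G)+d$ that means $n\le \Delta(G)+3$, which is false in general. A concrete failure is $G=P_6$ with $H$ the double star whose two adjacent centres have degree $5$. This $H$ is AVD-Type 2, because two adjacent maximum-degree vertices force $\chi''_a\ge\Delta+2$ and $H$ is bipartite. Take $o$ a centre. Then $d=5$, $n=6\le d+1$, $\Delta=7$ and $|S|=10$. Since $T$ holds six vertex colours, at most four colours remain for the five $H$-edges at $(g,o)$, so your construction cannot even start.

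Pairwise distinct vertex colours are never used anywhere in your argument; properness only needs $\psi$ proper. You also never use the hypothesis that $G$ satisfies the TCC. Use it: take $\psi$ with $|T|=\Delta(G)+2$. Then $|S\setminus T|\ge \Delta+1-\Delta(G)\ge d+1$, and your Case 3 goes through, with pairwise distinct $d$-subsets of a fixed $(d+1)$-subset of $S\setminus T$. That step, not distinct vertex colours, is where $n\le d+1$ is needed, exactly as with the sets $R_i$ in the paper.

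The second gap is Case 2, which you leave open, and the target you set there cannot be reached. In the tight case $\Delta=\Delta(G)+d$, even with $|T|=\Delta(G)+2$, only one colour of $S\setminus T$ is left after placing $\sigma_g(D_H(o))$. So most of the $\Delta(H)+1-d$ colours of $P\setminus C_H(o)$ must land in $T$, and $C(g,h)$ may meet $T$ in many colours.

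The missing idea is the paper's Step 2. For $n\ge 2$ (the case $n=1$ is trivial), fix a neighbour $g_1$ of $g$ and make $\sigma_g$ avoid $\psi(gg_1)$ altogether. This is affordable: $P\setminus C_H(o)$ must map injectively into $S\setminus\bigl(\{\psi(g),\psi(gg_1)\}\cup\sigma_g(D_H(o))\bigr)$. That set has $\Delta+1-d\ge\Delta(H)+1-d=|P\setminus C_H(o)|$ elements. Then $\psi(gg_1)\in C(g,o)$, but $\psi(gg_1)\notin C(g,h)$ for every $h\neq o$, which settles Case 2 at once.

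With these two changes your relabelling scheme becomes the paper's proof. The paper recolours $f_G(v_ir_1^i)$ inside $H_i$ by the colour of $\{1,\dots,d,t\}$ missing from $R_i$, which amounts to the same injection.
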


\begin{proof}
We note that $\Delta(G \triangleright_o H) =\max \{\Delta(G)+\Delta(H), \Delta(G)+\deg_H(o),\Delta(H)\} $. 
Let $H_1,...,H_n$ be the copies of $H$ in $G \triangleright_o H$ 
with respect to the vertices $v_1,...,v_n$ in $G$ (see Fig. \ref{Figure 4}). 

\begin{figure}[ht]
    \centering
    \begin{tikzpicture}[scale=0.9]
        \tikzset{
            h_ellipse/.style={
                draw,
                thick,
                ellipse,
                minimum width=1.6cm,
                minimum height=1.1cm,
                font=\normalsize,
                inner sep=0pt
            }
        }

        \draw[thick] (0,0) ellipse (2.2cm and 1.1cm);
        \node[font=\large] at (-0.2,0.1) {$G$};

        \coordinate (v1) at (0.6, 1.06);
        \fill (v1) circle (2pt);
        \node[below=5pt] at (v1) {$v_1$};

        \coordinate (v2) at (2.17, 0.2);
        \fill (v2) circle (2pt);
        \node[left=5pt] at (v2) {$v_2$};

        \coordinate (v3) at (1.0, -0.98);
        \fill (v3) circle (2pt);
        \node[above=5pt] at (v3) {$v_3$};

        \coordinate (v4) at (-1.1, -0.95);
        \fill (v4) circle (2pt);
        \node[above=5pt] at (v4) {$v_4$};

        \coordinate (vn) at (-1.1, 0.95);
        \fill (vn) circle (2pt);
        \node[below=5pt] at (vn) {$v_n$};

        \node[font=\large] at (-1.9, 0.2) {$\vdots$};

        \node[h_ellipse, anchor=255] at (v1) {$H_1$};

        \node[h_ellipse, anchor=190] at (v2) {$H_2$};

        \node[h_ellipse, anchor=115] at (v3) {$H_3$};

        \node[h_ellipse, anchor=60] at (v4) {$H_4$};

        \node[h_ellipse, anchor=300] at (vn) {$H_n$};

    \end{tikzpicture}
    \caption{The comb product $G \triangleright_o H$.}
    \label{Figure 4}
\end{figure}

We consider the following cases.

\textbf{Case (I).} $\Delta(G \triangleright_o H) = \Delta(G)+\Delta(H)$ or $\Delta(G \triangleright_o H) = \Delta(G)+\deg_H(o)$.

In this case, either $o$ is the vertex of $H$ of maximum degree or $o_1\ne o$ is the vertex of $H$ of maximum degree and $\deg_H(o_1)=\Delta(H)\leq \Delta(G)+\deg_H(o)$.
Let $A$ be a set of $\Delta(G)+2$ colors.
Since $G$ satisfies the TCC, let $f_G: V(G)\cup E(G)\rightarrow A$ be a proper total coloring of $G$.
Since $o\in H_i$ is merged with the $i^{th}$ vertex $v_i$ in $G$, vertex $o$ is colored with respect to $f_G$, say with color $a_i$. 
Since the number of edges incident to $v_i$ in $G$ is at most $\Delta(G)$, and $f_G$ uses $\Delta(G)+2$ colors from $A$, the set $A\setminus C_G(v_i)$ has at least one unused element, say $b_i$.

{\em Step 1:} 
We begin by coloring $H_i$ for each $1\leq i\leq n$.
Consider a set of $\deg_{H_i}(o)+1$ new colors, say $1,2,..., \deg_{H_i}(o),t$. 
Let $R_1 = \{1,...,\deg_{H_i}(o),t\}\backslash \{t\}$.
For each $2\le i\le n$, define
\[
\begin{aligned}
R_i &= \{1,...,\deg_{H_i}(o),t\}\backslash \{i-1\}.
\end{aligned}
\]
This is possible since $n\leq \deg_{H}(o)+1$. For every $1\leq i\leq n$, we color the edges incident to $o$ in $H_i$ with the colors of $R_i$.
We color the other elements of $H_i$ (excluding $o$ and edges in $H_i$ incident to $o$) with colors of $R_i$, $a_i$, $b_i$, and other $\Delta(H_i)+2-(2+\deg_{H_i}(o))$ colors from $A$ such that the coloring of $H_i$ is an AVD-total coloring. This is possible since 
$\Delta(H_i)-\deg_{H_i}(o)\leq \Delta(G)$, $|A|=\Delta(G)+2$,
$H_i$ is a copy of $H$ and $H$ has AVD-Type 2.

{\em Step 2:} We recolor the elements of $H_i$ for each $1\leq i\leq n$.
Let $\{r_1^i,..., r_{\deg_G(v_i)}^i\}$ be an enumeration of the open neighborhood of $v_i$ in $G$.
For each $i$, we pick the color of the edge $v_ir_1^i$, say $f_G(v_ir_1^i)$.
\renewcommand{\thefootnote}{\fnsymbol{footnote}}
If $H_i$ is colored with $f_{G}(v_ir_1^i),$ then we recolor all elements of $H_i$ that are colored with $f_G(v_ir_1^i)$ with the color $i-1$ 
for all $2\leq i\leq n$
and with the color $t$ if $i=1$.\footnotemark
\footnotetext{This is possible since ${i-1}\notin R_i$ for all $2\leq i\leq n$ and $t\not\in R_1$.}
\renewcommand{\thefootnote}{\arabic{footnote}}

Let $f_{H_i}$ be the total coloring of $H_i$ obtained after step 2.
Let
$D=\{1,2,\ldots,\deg_H(o),t\}$.
Then $|A\cup D|=\Delta(G)+2+\deg_H(o)+1$ and $A\cap D=\emptyset$.
We define a total coloring $f:V(G \triangleright_o H)\cup E(G \triangleright_o H)\rightarrow A\cup D$ of $G \triangleright_o H$ by
\[
f(x)=
\begin{cases}
f_G(x), & \text{if } x\in V(G)\cup E(G),\\
f_{H_i}(x), & \text{if } x\in \bigl(V(H_i)\setminus\{v_i\}\bigr)\cup E(H_i),\
1\le i\le n.
\end{cases}
\]

\begin{claim}\label{Claim 5}
{\em The coloring $f$ is an AVD-total coloring of $G \triangleright_o H$.}
\end{claim}

\textit{Proof:}
It is clear that $f$ is a proper total coloring of $G \triangleright_o H$.
Choose two vertices $u,v \in V(G \triangleright_o H)$ such that $u$ and $v$ are adjacent in $G \triangleright_o H$. 

\textbf{Case (i).} Let $u,v \in V(H_i)$ be such that $u,v \neq o$. 
It is easy to see that  $C_{G \triangleright_o H}(u)=C_{H_i}(u)$, $C_{G \triangleright_o H}(v)=C_{H_i}(v)$ and $C_{Hi}(u)\neq C_{H_i}(v)$. Thus, $C_{G \triangleright_o H}(u)\neq C_{G \triangleright_o H}(v)$. 

\textbf{Case (ii).} Let $u=o \in H_i$ and $v\in H_i$. Then $C_{G \triangleright_o H}(v)=C_{H_i}(v) \neq C_{H_i}(o)$.
Let $\{r_1^i,..., r_{\deg_G(o)}^i\}$ be the enumeration of the open neighborhood of $o=v_i$ in $G$ as in step 2.
By recoloring of $H_i$, the color $f(or_1^i)$ does not appear in the elements of $H_i$, and hence does not appear in the incident edges of $v$ in $H_i$.
Thus, $C_{G \triangleright_o H}(o)=C_{H_i}(o)\cup D_G(o)$, where
\[
C_{H_i}(o)\cap D_G(o)=\emptyset,\qquad
f(or_1^i)\in D_G(o),\qquad
f(or_1^i)\notin C_{G \triangleright_o H}(v).
\]
Thus, $f(or_1^i)\in C_{G \triangleright_o H}(o)$, and so $C_{G \triangleright_o H}(o)\neq C_{G \triangleright_o H}(v)$.

\textbf{Case (iii).} Let $u=v_i \in V(G)$ and $v=v_j \in V(G)$, where $i\neq j$.
In the coloring of the elements of $H_i,$ we colored the edges incident to $v_i$ and $v_j$ (in $H_i$ and $H_j$) with the colors of $R_i$ and $R_j$, respectively.
We note that $R_i\neq R_j$.
Also, the colors used in $R_i$ and $R_j$ does not appear in the edges incident to $u,v$ in $G$. 
Consequently, $C_{G \triangleright_o H}(u)\neq C_{G \triangleright_o H}(v)$.

This concludes the proof of Claim \ref{Claim 5}.

\textbf{Case (II).} 
$\Delta(G \triangleright_o H)=\Delta(H).$

In this case, we have $\Delta(H)+3$ colors available to color $G \triangleright_o H$. We note that $\Delta(H)\geq \Delta(G)+\deg_H(o)$.
Among $\Delta(H)+3$ available colors, let $Y$ be a set of $\Delta(G)+\deg_H(o)+3$ colors and $Z$ be the set of all available colors that are not in $Y$.
Let $A\subseteq Y$ be a set of $\Delta(G)+2$ colors and let $D=\{1,...,\deg_{H}(o), t\}$ be a set of colors of $Y\backslash A$.
As in Case (I), let $f_{G}:V(G)\cup E(G)\rightarrow A$ be a proper total coloring of $G$ and we color the edges incident to $o$ in $H_i$ with the colors of $R_i$ for each $1\leq i\leq n$. 
Assume $a_i, b_i$ as in Case (I).
We color the other elements of $H_i$ (excluding the distinguished vertex $o\in V(H_i)$ and edges incident to $o$ in $H_i$) with colors of $R_i,a_i,b_i$,   $(\Delta(G)+\deg_H(o)+2)-(2+\deg_H(o))$ colors from the available colors in $A$ and extra colors form $Z$ such that the coloring of $H_i$ is AVD-total coloring. 
Recolor $H_i$ as in step (2) of Case (I) and define $f_{H_i}$ and $f$ as in Case (I). Similar to the arguments of Claim \ref{Claim 5}, $f$ is an AVD-total coloring of $G \triangleright_o H$.
\end{proof}

\begin{thm}\label{Theorem 6.2}
{\em Let $G$ and $H$ be two connected graphs such that $G$ and $H$ have AVD-Type 2. Then $G\triangleright_o H$ satisfies the AVDTCC.}
\end{thm}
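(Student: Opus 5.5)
We follow the template of Theorem \ref{Theorem 6.1}, but take an AVD total coloring of $G$ itself, not just a total coloring. Recall $\Delta(G\triangleright_o H)=\max\{\Delta(G)+\deg_H(o),\Delta(H)\}$. Since $G$ and $H$ are AVD-Type 2, fix an AVD total coloring $f_G$ of $G$ with a set $A$ of $\Delta(G)+2$ colors. Also fix an AVD total coloring $\varphi$ of $H$ with a set $B$ of $\Delta(H)+2$ colors. We split into the same two cases as before: Case (I), $\Delta(H)\le \Delta(G)+\deg_H(o)$, and Case (II), $\Delta(H)>\Delta(G)+\deg_H(o)$. In Case (I) we have $\Delta(G)+\deg_H(o)+3$ colors available. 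We write them as $A\cup D$, where $D$ is a set of $\deg_H(o)+1$ new colors disjoint from $A$. In Case (II) we add the extra colors in $Z$ exactly as in Case (II) of Theorem \ref{Theorem 6.1}.

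\textbf{Step 1 (coloring the copies).} For each copy $H_i$ we use a color permutation of $\varphi$. It must send the color $\varphi(o)$ to $f_G(v_i)$, and send the colors on the edges at $o$ into $D$. The remaining colors of $B$ are sent into $(A\cup D\cup Z)\setminus (\{f_G(v_i)\}\cup D_G(v_i))$. After permutation, $H_i$ is still AVD-total colored. Counting shows there is room for such a permutation. The colors forbidden to the non-$o$ elements are the at most $\Delta(G)+1$ colors of $C_G(v_i)$, and these can only clash with edges at $o$, so we may even take them to avoid all of $C_G(v_i)$. The pool of remaining colors has size at least $(\Delta(G)+\deg_H(o)+3)-(\Delta(G)+1)=\deg_H(o)+2$ plus $|Z|$. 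This matches the $\Delta(H)+2$ colors needed whenever $\Delta(H)\le \Delta(G)+\deg_H(o)$ after we reuse colors of $A\setminus C_G(v_i)$. Two details need care here. First, the edges at $o$ must avoid $D_G(v_i)$, which holds since $D\cap A=\emptyset$. Second, the elements of $H_i$ not incident to $o$ may use colors of $A\setminus\{f_G(v_i)\}$ freely, including colors in $D_G(v_i)$, except on neighbors of $o$.

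\textbf{Step 2 (checking AVD).} For an edge $uv$ inside $H_i$ with $u,v\ne o$, we have $C(u)=C_{H_i}(u)\ne C_{H_i}(v)=C(v)$. For the edge $o\,v$ in $H_i$, $C(o)=C_{H_i}(o)\cup D_G(v_i)$ has size $\deg_H(o)+\deg_G(v_i)+1$. We arrange, by a recoloring as in Step 2 of Theorem \ref{Theorem 6.1}, that some color of $D_G(v_i)$ does not appear on $v$ or its incident edges. Then $C(o)\neq C(v)$. If $\deg_G(v_i)=0$ this cannot happen, but then $G$ is trivial, since $G$ is connected. For an edge $v_iv_j$ of $G$, we have $C(v_i)=C_G(v_i)\cup E_i$ and $C(v_j)=C_G(v_j)\cup E_j$. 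Here $E_i,E_j\subseteq D$ are the edge-color sets at $o$. This is where the AVD property of $f_G$ replaces the hypothesis $n\le\deg_H(o)+1$. If every copy uses the same set $E_i=E\subseteq D$, disjoint from $A$, then $C(v_i)\cap A=C_G(v_i)\ne C_G(v_j)=C(v_j)\cap A$. So we can take the same set $E$ for all $i$ and avoid the sets $R_i$ entirely.

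\textbf{Main obstacle.} The hardest step is making $C(o)\ne C(v)$ for a neighbor $v$ of $o$ in $H_i$. This requires some edge color of $G$ at $v_i$ to be absent from $C_{H_i}(v)$, while keeping the coloring of $H_i$ proper and AVD. We would first try the footnoted recoloring of Theorem \ref{Theorem 6.1}: replace a chosen color $f_G(v_ir^i_1)$ throughout $H_i$ by the one color of $D$ left unused on the edges at $o$ (take $|E|=\deg_H(o)$ inside $D$ of size $\deg_H(o)+1$). This color swap is a bijective renaming of the colors on $H_i\setminus\{o\}$. It therefore preserves properness and the AVD property inside $H_i$. Moreover, the spare color does not occur at $o$, so no conflict arises at $o$. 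Case (II) is handled identically, with the extra colors of $Z$ absorbing the excess $\Delta(H)-\Delta(G)-\deg_H(o)$.
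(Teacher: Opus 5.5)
Your strategy is the paper's. You take an AVD-total coloring $f_G$ of $G$ on a palette $A$ and color the edges at $o$ in every copy from a disjoint palette $D$, so that $C(v_i)\cap A=C_G(v_i)$. You then use the Step~2 recoloring of Theorem~\ref{Theorem 6.1} to handle the edges $ow$. Your remark that one common set $E\subseteq D$ suffices for all copies is correct, and it is cleaner than the paper's sketch. The paper keeps the sets $R_i$ and the split $n\le d+1$, although its Case~(iii) argument never uses distinctness of the $R_i$. Its sketch also leaves the recoloring color unspecified for $i\ge d+2$, since $i-1\notin D$ there.

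However, Step~1 as you wrote it is wrong, and it is inconsistent with your Step~2.
\begin{itemize}
\item The claim that the non-$o$ elements of $H_i$ can avoid all of $C_G(v_i)$ fails. In Case~(I) with $\deg_G(v_i)=\Delta(G)$, the set $(A\cup D)\setminus(C_G(v_i)\cup E)$ has exactly $2$ colors. The colors of $\varphi$ other than $\varphi(o)$ and the edge colors at $o$ number $\Delta(H)+1-\deg_H(o)$, which exceeds $2$ whenever $\Delta(H)\ge\deg_H(o)+2$. Your count claiming that ``$\deg_H(o)+2$ matches $\Delta(H)+2$'' is therefore incorrect.
\item Conversely, once you let these colors range over $(A\cup D\cup Z)\setminus\cdots$, the spare color $s\in D\setminus E$ may already occur in $H_i$. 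Replacing $f_G(v_ir_1^i)$ by $s$ is then not a bijective renaming, and it can destroy both properness and the AVD property inside $H_i$.
\item Your extra restriction ``except on neighbors of $o$'' is unnecessary, and it need not be satisfiable under a global color permutation.
\end{itemize}

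The repair is to send the $\Delta(H)+1-\deg_H(o)$ remaining colors of $\varphi$ injectively into $(A\setminus\{f_G(v_i)\})\cup Z$, with colors of $D_G(v_i)$ allowed anywhere. This is possible in both cases because $\Delta(G)+1+|Z|\ge \Delta(H)+1-\deg_H(o)$, with equality in Case~(II). Then $s$ is unused in $H_i$, the replacement $f_G(v_ir_1^i)\mapsto s$ is a genuine renaming, and $f_G(v_ir_1^i)$ vanishes from $H_i$. Your AVD checks then go through as stated. This is exactly the paper's coloring of $H_i$, which uses colors from $R_i$, $A$ and $Z$ and never uses $D\setminus R_i$ before the recoloring.
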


\begin{proof}[Proof Sketch]
Let $n=|V(G)|$ and $d=\deg_H(o)$. 
Let $A$ be a set of $\Delta(G)+2$ colors disjoint from $\{1,\ldots,d,t\}$.
Since $G$ has AVD-Type 2, it satisfies the TCC.
If $n\leq d+1$, the result follows directly from Theorem~\ref{Theorem 6.1}.
Suppose that $n>d+1$. In the proof of Theorem~\ref{Theorem 6.1}, retain the same construction, except for the definition of $f_G$ and the choice of the sets $R_i$.
Since $G$ has AVD-Type 2, let $f_G:V(G)\cup E(G)\rightarrow A$ be an AVD-total coloring of $G$. 
We use the $d+1$ colors
$\{1,2,\ldots,d,t\}$
and define
$R_1=\{1,2,\ldots,d\}$,
and, for $2\leq i\leq d+1$, let
$R_i=\{1,2,\ldots,d,t\}\setminus\{i-1\}$.
For $d+2\leq i\leq n$, define
$R_i=R_{d+1}$.
Thus, $R_1,\ldots,R_{d+1}$ are pairwise distinct, while
\[
R_{d+2}=\cdots=R_n=R_{d+1}.
\]
All parts of the proof of Theorem~\ref{Theorem 6.1} remain unchanged, except for Case (iii) of Claim~\ref{Claim 5}. Let $u=v_i \in V(G)$ and $v=v_j \in V(G)$ where $i\neq j$ as in Case (iii) of Claim~\ref{Claim 5}. 
Since $f_G$ is an AVD-total coloring of $G$, we have $C_G(v_i)\neq C_G(v_j)$ for each $v_iv_j\in E(G)$.
As the colors used for $f_G$ are chosen disjoint from $\{1,2,...,d,t\}$, we have
$
C_{G\triangleright_o H}(v_i)\neq
C_{G\triangleright_o H}(v_j).
$
\end{proof}

\section{Cover Product}

\begin{thm}\label{Theorem 7.1}
    {\em Let $G$ and $H$ be two total colorable connected graphs. Let $k_{1}$ and $k_{2}$ be the vertex covering
numbers of $G$ and $H$, respectively. If either 
\begin{center}
    $\Delta(H) \leq \Delta(G)$ and $k_{1}+1 < k_{2}$ or $\Delta(G) \leq \Delta(H)$ and
$k_{2}+1 < k_{1}$ 
\end{center}
then $G \oast H$ satisfies AVDTCC.}
\end{thm}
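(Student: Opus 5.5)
By symmetry it suffices to treat $\Delta(H)\le\Delta(G)$ and $k_1+1<k_2$; the other case follows by exchanging the roles of $G$ and $H$. Fix minimum vertex covers $VC(G)$ and $VC(H)$ with $|VC(G)|=k_1$ and $|VC(H)|=k_2$. In $G\oast H$ the degrees are
\[
\deg(u)=\deg_G(u)+k_2 \quad (u\in VC(G)), \qquad \deg(w)=\deg_H(w)+k_1 \quad (w\in VC(H)),
\]
and vertices outside the covers keep their original degrees. Hence
\[
\Delta:=\Delta(G\oast H)=\max\{\Delta(G)+k_2,\ \Delta(H)+k_1\}=\Delta(G)+k_2 ,
\]
because $\Delta(H)\le\Delta(G)$ and $k_1<k_2$. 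We have $\Delta+3$ colors, which we split into three disjoint palettes: a set $A$ of $\Delta(G)+2$ colors, a set $B$ of $k_2+1$ colors, and the complete bipartite join between $VC(G)$ and $VC(H)$ to be colored from colors drawn partly from $B$ and partly from $A$.

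\textbf{Construction.} First, take a proper total coloring of $G$ from $A$; it exists since $G$ satisfies the TCC. Next, total color $H$ with colors from $A$ as well; this works because $\Delta(H)+2\le|A|$. The copies are vertex-disjoint, and the only conflicts arise at the join edges.

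The join $K_{k_1,k_2}$ has maximum degree $k_2$. We edge color it with $k_2$ colors from $B$ using a Latin rectangle, namely the first $k_1$ rows and the first $k_2$ columns of a cyclic Latin square as in Lemma~\ref{Lemma 2.1}. Then every $u\in VC(G)$ sees all $k_2$ colors of a fixed $k_2$-subset $B'\subseteq B$, while each $w\in VC(H)$ sees $k_1$ colors of $B'$. By Lemma~\ref{Lemma 2.2}, distinct $w$'s see distinct $k_1$-subsets provided $k_1<k_2$. If instead we use an order $k_2+1$ square, we keep one spare color of $B$ that can be avoided.

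Since $A\cap B=\emptyset$, no join edge conflicts with elements of $G$ or $H$, so the coloring is proper.

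\textbf{AVD check.} We use that $k_1+1<k_2$, i.e.\ $k_2\ge k_1+2$.

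For $u\in VC(G)$ and $w\in VC(H)$ adjacent through the join, $C(u)$ contains all of $B'$, $|B'|=k_2$, whereas $C(w)\cap B$ has size $k_1<k_2$. Hence $C(u)\ne C(w)$.

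For two adjacent vertices $u,u'$ of $G$, the case both in $VC(G)$ is the delicate one: both see all of $B'$, so we need $C_G(u)\ne C_G(u')$ under the coloring restricted to $A$, which a mere total coloring does not guarantee. If exactly one endpoint lies in the cover, its set meets $B$ and the other's does not, so they differ. The same reasoning applies inside $H$: when both endpoints lie in $VC(H)$ their $B$-parts already differ by Lemma~\ref{Lemma 2.2}, so $H$ gives no trouble.

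\textbf{Main obstacle.} The difficulty is adjacent cover vertices of $G$ that have equal $A$-color sets. To break ties I would exploit the slack: $k_2\ge k_1+2$ lets each $u\in VC(G)$ miss a designated color of $B$ by using a $(k_2+1)$-palette. Concretely, I would color the join using a $(k_1+1)\times(k_2+1)$ array in the manner of Theorem~\ref{Theorem 4.3}, moving the vertex colors of the $VC(G)$ vertices into $B$. Each $u$ then gets vertex color plus incident join colors forming a distinct $k_2+1$-set, namely a distinct row of the Latin square, and the vertex colors of the $VC(G)$ vertices are distinct. The $A$-part of $C(u)$ is then just $D_G(u)$, with the freed vertex color of $u$ absorbed. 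Pairwise distinctness of the rows by Lemma~\ref{Lemma 2.2} distinguishes any two cover vertices of $G$. This step, making the vertex recoloring consistent with properness inside $G$ while staying within $\Delta+3$ colors, is where I expect the real work to lie.
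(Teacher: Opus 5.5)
\textbf{Two genuine gaps.} The first is in properness. Every $u\in VC(G)$ is adjacent to every $w\in VC(H)$ through the join, yet you give both vertex colors from the common palette $A$ by independent total colorings of $G$ and $H$. Nothing then prevents $c(u)=c(w)$; the fact $A\cap B=\emptyset$ protects only the join \emph{edges}.

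The second is the case you correctly flag as the crux: adjacent $u,u'\in VC(G)$. It is left open, and the repair you sketch fails for a counting reason. If $u$'s vertex color and its $k_2$ join-edge colors all lie in $B$, they are $k_2+1$ distinct elements of a set of size $k_2+1$. Hence $C(u)\cap B=B$ for \emph{every} $u\in VC(G)$. Equivalently, all full rows of a Latin square of order $k_2+1$ have the same symbol set, and Lemma~\ref{Lemma 2.2} separates rows only of a proper subrectangle ($k,l<n$). You would still need $D_G(u)\neq D_G(u')$, which no total coloring of $G$ guarantees.

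\textbf{How the paper handles both.} It takes a cyclic Latin square of order $k_2+1$ on $C$ (your $B$) and indexes its first $k_2$ rows by $VC(H)$. Join edges are colored from columns $1,\dots,k_1$, and each vertex of $VC(H)$ is recolored by its entry in column $k_1+1$. The graph $G$ and all other elements of $H$ keep colors from $D$ (your $A$). Three consequences follow:
\begin{enumerate}
\item Vertex colors on the two sides of the join lie in $C$ and $D$ respectively, so the coloring is proper.
\item Each $v_j\in VC(G)$ sees exactly $C\setminus\{m_{k_2+1,j}\}$, and these omitted symbols are pairwise distinct. So adjacent vertices of $VC(G)$ are distinguished whatever the coloring of $G$.
\item Each vertex of $VC(H)$ sees $k_1+1$ colors of $C$, and these row segments are distinct by Lemma~\ref{Lemma 2.2}. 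Vertices of $VC(G)$ see $k_2$ colors of $C$, and $k_1+1<k_2$ is exactly what separates the two sides.
\end{enumerate}
Your aside about an order-$(k_2+1)$ square was in fact the right tie-breaker: a row restricted to $k_2$ of its $k_2+1$ columns omits a symbol that varies with the row. The mistake was moving the $VC(G)$ vertex colors into $B$, which fills that omitted color back in. The vertex colors to move there were those of $VC(H)$.

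\textbf{A separate issue, shared with the paper.} Your formula $\Delta(G\oast H)=\Delta(G)+k_2$, like the paper's Claim~\ref{Claim 6}, tacitly assumes $VC(G)$ contains a vertex of degree $\Delta(G)$. This can fail. In the subdivision of $K_{1,3}$ (three legs of length two), the unique minimum cover consists of the three degree-$2$ vertices. In that situation $\Delta(G\oast H)\le\Delta(G)+k_2-1$, so a palette of $\Delta(G)+k_2+3$ colors exceeds $\Delta(G\oast H)+3$.
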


\begin{proof}
Let $V(VC(G))=\{v_1,...,v_{k_1}\}$ and $V(VC(H))=\{u_1,...,u_{k_2}\}$ be the minimum vertex covers of $G$ and $H$ respectively.

\begin{figure}[ht]
    \centering
    \begin{tikzpicture}[scale=1]

        \draw[thick] (-2.5, 0.2) ellipse (0.9cm and 2.1cm);
        \node[below=4pt] at (-3.9, 1) {$G$};
        
        \draw[thick] (-2.5, 0.5) ellipse (0.45cm and 1.2cm);
        \node[below] at (-2.5, -0.7) {$VC(G)$};
        
        \fill (-2.5, 1.3)  circle (2pt) coordinate (L1);
        \fill (-2.5, 0.6)  circle (2pt) coordinate (L2);
        \fill (-2.5, -0.3) circle (2pt) coordinate (L3);
        \node at (-2.5, 0.15) {$\vdots$};

        \draw[thick] (2.5, 0.2) ellipse (0.9cm and 2.1cm);
        \node[below=4pt] at (3.9, 1) {$H$};
        
        \draw[thick] (2.5, 0.5) ellipse (0.45cm and 1.2cm);
        \node[below] at (2.5, -0.7) {$VC(H)$};
        
        \fill (2.5, 1.3)  circle (2pt) coordinate (R1);
        \fill (2.5, 0.6)  circle (2pt) coordinate (R2);
        \fill (2.5, -0.3) circle (2pt) coordinate (R3);
        \node at (2.5, 0.15) {$\vdots$};

        \draw[thick] (L1) -- (R1);
        \draw[thick] (L1) -- (R2);
        \draw[thick] (L1) -- (R3);
        
        \draw[thick] (L2) -- (R1);
        \draw[thick] (L2) -- (R2);
        \draw[thick] (L2) -- (R3);
        
        \draw[thick] (L3) -- (R1);
        \draw[thick] (L3) -- (R2);
        \draw[thick] (L3) -- (R3);

    \end{tikzpicture}
    \caption{The cover product $G \circledast H$.}
    \label{Figure 5}
\end{figure}

\begin{claim}\label{Claim 6}
{\em The vertices of maximum degree of $G$ and $H$ are in $VC(G)$ and $VC(H)$.}
\end{claim}

\textit{Proof:}
Pick any vertex $x \in V(G) \setminus VC(G)$. Since $VC(G)$ is a vertex cover of $G$, every neighbor of $x$ in $G$ belongs to $VC(G)$, and hence $\deg_G(x) \le |VC(G)|$. Moreover, by the definition of $G \circledast H$, $x$ is adjacent to no vertices outside $VC(G)$. Thus $\deg_{G \circledast H}(x) \le |VC(G)|$.
However, any vertex $v\in V(VC(H))$ is adjacent to $|VC(G)|+\deg_{H}(v)>|VC(G)|$ vertices in $G\oast H$ (see Fig. \ref{Figure 5}). This is possible since $\deg_{H}(v)>0$ as $H$ is connected.
So, no vertex in $V(G)\setminus VC(G)$ can be a vertex of
maximum degree in $G\oast H$. 
Similarly, every vertex of maximum degree
of $H$ belongs to $VC(H)$.
This concludes the proof of Claim \ref{Claim 6}.

By Claim \ref{Claim 6}, $\Delta(G\oast H)= max \{\Delta(G)+k_2, \Delta(H)+k_1\}$.
We assume $\Delta(H)\leq \Delta(G)$ and $k_1+1<k_2$. 
The other case will be analogous.
Let $D=\{k_2+2,...,k_2+\Delta(G)+3\}$ be a set of colors with $|D| = \Delta(G) + 2$. 
Since $G$ and $H$ satisfy the TCC, there are proper total colorings $f_1 : V(G)\cup E(G) \to D$ and $f_2 : V(H)\cup E(H) \to D$ of $G$ and $H$ respectively. 
Let $C=\{1,...,k_2+1\}$. 
We recolor the vertices of $VC(H)$ and color the join edges between
$VC(G)$ and $VC(H)$ using the colors in $C$. Consider a Latin square $M=[m_{i,j}]$ of order $k_2+1$ 
as in Lemma \ref{Lemma 2.2}. 
Let $M'=[m_{i,j}]_{1\leq i\leq k_2,1\leq j\leq k_1+1}$ be the submatrix consisting of the first $k_2$ rows and first $k_1+1$ columns.
Consider the complete bipartite graph $B$ with bipartitions $VC(G)$ and $VC(H)$, which is a subgraph of $G \oast H$. We define
\begin{align*}
g(x)=
\begin{cases}
m_{i,j} & \text{if } x=u_iv_j \text{ with }  1\leq i \leq k_2, 1\leq j \leq k_1 \\[2mm]
m_{i, k_1+1} & \text{if } x=u_i \text{ with } 1\leq i \leq k_2.
\end{cases}
\end{align*}
We now define a total coloring $f:V(G\oast H)\cup E(G\oast H)\rightarrow C\cup D$ by
\[
f(x)=
\begin{cases}
f_1(x), & \text{if } x\in V(G)\cup E(G),\\
f_2(x), & \text{if } x\in E(H)\cup V(H)\backslash V(VC(H)),\\
g(x), & \text{if } x\in E(B)\cup V(VC(H)).
\end{cases}
\]
It is clear that $C\cap D=\emptyset$ and $|C\cup D|=\Delta(G)+k_2+3$.

\begin{claim}\label{Claim 7}
{\em The coloring $f:G\oast H\rightarrow C\cup D$ is an AVD-total coloring of $G\oast H$.}
\end{claim}

\textit{Proof:}
By construction, 
$f$ is a proper total coloring of $G\oast H$.
Pick $uv\in E(G\oast H)$. 

\textbf{Case (i).} Let $u\in V(H)\backslash V(VC(H))$ and $v\in V(VC(H))$. 
Then $C_{G\oast H}(u)\neq C_{G\oast H}(v)$ since
\begin{center}
    $C_{G\oast H}(u)=C_{H}(u)\subseteq D$ and $C_{G\oast H}(v)\cap C\neq \emptyset$. 
\end{center}
Similarly, if $u\in V(G)\backslash V(VC(G))$ and $v\in V(VC(G))$ then $C_{G\oast H}(u)\neq C_{G\oast H}(v)$.

\textbf{Case (ii).} Suppose $u,v\in V(VC(H))$.
Then $C_{G\oast H}(x)=C_{B}(x)\cup D_{H}(x)$ for $x\in \{u,v\}$
where 
\begin{center}
$C_{B}(u)\neq C_{B}(v)$    
\end{center}
by Lemma \ref{Lemma 2.2} and $D_{H}(u), D_{H}(v)\subseteq D$. Therefore, $C_{G\oast H}(u)\neq C_{G\oast H}(v)$. 
Similarly, if $u,v\in V(VC(G))$ then 
$C_{G\oast H}(u)\neq C_{G\oast H}(v)$ by Lemma \ref{Lemma 2.2}.

\textbf{Case (iii).} $u\in V(VC(G))$ and $v\in V(VC(H))$. 
We note that $C_{B}(u)\neq C_{B}(v)$ since 
\begin{center}
    $|C_{B}(v)\cap C|=k_1+1<k_2=|C_{B}(u)\cap C|$. 
\end{center}
Moreover, 
$C_{G\oast H}(v)=C_{B}(v)\cup D_{H}(v)$ and
$C_{G\oast H}(u)=C_{B}(u)\cup D_{G}(u)$ where $D_H(v), D_{G}(u)\subseteq D$.
Consequently, $C_{G\oast H}(u)\neq C_{G\oast H}(v)$.

This shows that $C_{G\oast H}(x)\neq C_{G\oast H}(y)$ for every edge $xy\in E(G\oast H)$. 

Thus, $\chi''_a(G\oast H) \le \Delta(G\oast H)+3$, and therefore $G\oast H$ satisfies the AVD-TCC.
\end{proof}

\section{Indu--Bala Product}

\begin{thm}\label{Theorem 8.1}
    {\em Let $G_1$ and $G_2$ be graphs of order $m$ and $n$ respectively such that
    \begin{enumerate}
        \item $m>n+1$,
        \item $\Delta(G_2)\geq\Delta(G_1)$, and
        \item $G_2$ is a total colorable graph.
    \end{enumerate}
    Then, $G=G_1\blacktriangledown G_2$ satisfies the AVDTCC.    
    }
\end{thm}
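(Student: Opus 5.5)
Let $G_1^1,G_2^1$ and $G_1^2,G_2^2$ be the two copies of $G_1\vee G_2$, and let $y^1y^2$ denote the edge joining corresponding vertices of $G_2^1$ and $G_2^2$. First I would compute the degrees. A vertex $x$ of $G_1^j$ has degree $\deg_{G_1}(x)+n$, and a vertex $y$ of $G_2^j$ has degree $\deg_{G_2}(y)+m+1$. Since $\Delta(G_2)\ge\Delta(G_1)$ and $m>n+1$, we get $\Delta(G)=\Delta(G_2)+m+1$. So we have $\Delta(G_2)+m+4$ colors available, which I split into three disjoint sets:
\[
C=\{1,\dots,m+1\},\qquad D=\{d_1,\dots,d_{\Delta(G_2)+2}\},\qquad \{t\}.
\]

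\textbf{Construction.} I would build the coloring in four steps.
\begin{enumerate}
\item Let $\varphi:V(G_2)\cup E(G_2)\to D$ be a proper total coloring, which exists by hypothesis (3). Color $G_2^1$ by $\varphi$ and $G_2^2$ by $\sigma\circ\varphi$, where $\sigma$ is the cyclic shift $d_i\mapsto d_{i+1}$ with indices taken mod $\Delta(G_2)+2$.
\item Give every matching edge $y^1y^2$ the color $t$.
\item In each copy $j$, color the complete bipartite join between $G_2^j=\{u_1,\dots,u_n\}$ and $G_1^j=\{w_1,\dots,w_m\}$ using the Latin square $M=[m_{i,l}]$ of order $m+1$ from Lemma \ref{Lemma 2.2}: set $c(u_iw_l)=m_{i,l}$ for $1\le i\le n$, $1\le l\le m$, and color the vertex $w_l$ by $m_{n+1,l}$.
\item Color $E(G_1^j)$ properly with $\Delta(G_1)+1\le\Delta(G_2)+2$ colors from $D$, which is possible by Vizing's theorem.
\end{enumerate}

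\textbf{Properness.} Inside a copy of $G_2$ the coloring is proper because $\varphi$ and $\sigma\circ\varphi$ are proper. At each vertex, the join-edge colors are distinct entries of one row or one column of $M$. The vertex colors $m_{n+1,l}$ of $G_1^j$ are pairwise distinct, and each differs from the join-edge colors in its own column. The edges of $G_1^j$ use $D$, which is disjoint from $C$. The color $t$ is used only on a matching. Finally, $\sigma$ has no fixed point, so $\varphi(y)\neq\sigma(\varphi(y))$ and the ends of each edge $y^1y^2$ get different colors.

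\textbf{AVD property.} I would compare each color set $C_G(\cdot)$ with $C$, splitting into four cases.
\begin{enumerate}
\item Two adjacent vertices of $G_2^j$. Their sets $C_G(\cdot)\cap C$ are two distinct rows, restricted to the first $m$ columns, of the submatrix of $M$ formed by the first $n$ rows and first $m$ columns. These differ by Lemma \ref{Lemma 2.2}(1), since $n,m<m+1$.
\item Two adjacent vertices of $G_1^j$. Their sets $C_G(\cdot)\cap C$ are two distinct columns of the first $n+1$ rows of $M$. These differ by Lemma \ref{Lemma 2.2}(2), since $n+1<m+1$.
\item A vertex of $G_1^j$ adjacent to a vertex of $G_2^j$. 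The first has $|C_G\cap C|=n+1$ and the second has $|C_G\cap C|=m$, and these differ because $m>n+1$.
\item The ends of a matching edge $y^1y^2$. This is the main obstacle, because both sets contain $t$ and have identical $C$-parts. So everything rests on the $D$-parts $C_{G_2}(y)$ under $\varphi$ and $\sigma(C_{G_2}(y))$. Here $|C_{G_2}(y)|\le\Delta(G_2)+1<|D|$, so $C_{G_2}(y)$ is a nonempty proper subset of the cyclic group of order $\Delta(G_2)+2$. A generator-shift cannot fix such a set, so $\sigma(C_{G_2}(y))\ne C_{G_2}(y)$. This is the cyclic analogue of the argument in Case (iii) of Claim \ref{Claim 2}, and it avoids spending an extra color.
\end{enumerate}

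If Lemma \ref{Lemma 2.2} requires the order $m+1>2$, this holds because $m>n+1\ge 2$.
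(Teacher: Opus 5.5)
Your proof is correct. Most of it coincides with the paper's:
\begin{itemize}
\item the same split of the $\Delta(G_2)+m+4$ colors into $C$, $D$ and one extra color for the matching;
\item the order-$(m+1)$ Latin square on each join (you index rows by $G_2$ and columns by $G_1$, the transpose of the paper's $M'$, which is immaterial since $M$ is symmetric);
\item a total coloring of $G_2$ and an edge coloring of $G_1$ from $D$;
\item a cyclic shift $\sigma$ of $D$ on the second copy of $G_2$.
\end{itemize}

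The real difference is in the crux case, a matching edge $v_jv_j'$. The paper colors the second join with the Latin square shifted by one column ($m_{i,j+1}$ on join edges, $m_{i,n+2}$ on the vertices of $G_1'$). The $C$-parts of $v_j$ and $v_j'$ are then columns $j$ and $j+1$ truncated to $m$ rows, which omit different symbols (Claim~\ref{Claim 9}). There $\sigma$ only needs to be fixed-point-free, for properness of $v_jv_j'$.

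You reuse the identical join coloring, so the $C$-parts coincide, and you let $\sigma$ do double duty through the orbit argument. Since $C_{G_2}(y)$ is a nonempty proper subset of $D$ and $\sigma$ is a single $|D|$-cycle, $\sigma(C_{G_2}(y))\neq C_{G_2}(y)$.

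Your route is leaner. It needs no shifted block and no analogue of Claim~\ref{Claim 9}, and it uses Lemma~\ref{Lemma 2.2} only on leading blocks, exactly as stated. By contrast, the paper's rows for $G_1'$ run over columns $2,\dots,n+2$ and tacitly need a shifted form of the lemma.

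The paper's route, in exchange, decouples the distinction at $v_jv_j'$ from the colorings of $G_2$ and $G_2'$, and does not need $|D|>\Delta(G_2)+1$. For instance, suppose $G_2$ is Type~1 with $\Delta(G_2)\ge 1$. The paper's scheme still works with $|D|=\Delta(G_2)+1$ and yields $\Delta(G)+2$ colors. Your argument would fail there at a maximum-degree vertex $y$ of $G_2$, where $C_{G_2}(y)=D$ is $\sigma$-invariant.
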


\begin{proof}
Let $H_1$ be the first copy of $G_1 \vee G_2$ and $H_2$ be the second copy of $G_1 \vee G_2$. Let $G_1'$ and $G_2'$ denote the copies of $G_1$ and $G_2$ in $H_2$. 
Suppose $V(G_1)=\{u_1,...,u_m\}$, $V(G_2)=\{v_1,...,v_n\}$, $V(G'_1)=\{u'_1,...,u'_m\}$ and $V(G'_2)=\{v'_1,...,v'_n\}$ be the set of vertices of $G_1, G_2, G'_1$ and $G'_2$ respectively (see Fig. \ref{Figure 6}). 
We note that $\Delta(G)=\max\{\Delta(G_1)+n,\ \Delta(G_2)+m+1\}=\Delta(G_2)+m+1.$

\begin{figure}[ht]
    \centering
    \begin{tikzpicture}[scale=1.7]

        \draw[thick, dashed, gray!80, rounded corners=18pt]
            (-4.4,-1.5) rectangle (-0.6,1.2);

        \draw[thick, dashed, gray!80, rounded corners=18pt]
            (0.6,-1.5) rectangle (4.4,1.2);

        \draw[thick, rounded corners=12pt]
            (-4.1,-1) rectangle (-3.1,1.05);
        \node[below=3pt] at (-3.6,-1) {$G_1$};

        \node[below=3pt] at (-2.3,1.2) {$H_1=G_1\vee G_2$};

        \draw[thick, rounded corners=12pt]
            (-1.5,-0.75) rectangle (-0.9,1.05);
        \node[below=3pt] at (-1.2,-0.8) {$G_2$};

        \draw[thick, rounded corners=12pt]
            (0.9,-0.75) rectangle (1.5,1.05);
        \node[below=3pt] at (1.2,-0.8) {$G_2'$};
        \node[below=3pt] at (2.3,1.2) {$H_2=G'_1\vee G'_2$};

        \draw[thick, rounded corners=12pt]
            (3.1,-1) rectangle (4.1,1.05);
        \node[below=3pt] at (3.6,-1) {$G_1'$};

        \fill (-3.6,0.7) circle (2pt) coordinate (U1);
        \fill (-3.6,-0.7) circle (2pt) coordinate (Um);
        \node[left=3pt] at (U1) {$u_1$};
        \node[left=3pt] at (Um) {$u_m$};
        \node at (-3.6,0) {$\vdots$};

        \fill (-1.2,0.55) circle (2pt) coordinate (V1);
        \fill (-1.2,-0.25) circle (2pt) coordinate (Vn);
        \node[above=2pt] at (V1) {$v_1$};
        \node[below=2pt] at (Vn) {$v_n$};
        \node at (-1.2,0.25) {$\vdots$};

        \fill (1.2,0.55) circle (2pt) coordinate (V1p);
        \fill (1.2,-0.25) circle (2pt) coordinate (Vnp);
        \node[above=2pt] at (V1p) {$v_1'$};
        \node[below=2pt] at (Vnp) {$v_n'$};
        \node at (1.2,0.25) {$\vdots$};

        \fill (3.6,0.7) circle (2pt) coordinate (U1p);
        \fill (3.6,-0.7) circle (2pt) coordinate (Ump);
        \node[right=3pt] at (U1p) {$u_1'$};
        \node[right=3pt] at (Ump) {$u_m'$};
        \node at (3.6,0) {$\vdots$};

        \draw[thick] (U1) -- (V1);
        \draw[thick] (U1) -- (Vn);
        \draw[thick] (Um) -- (V1);
        \draw[thick] (Um) -- (Vn);

        \draw[thick] (V1) -- (V1p);
        \draw[thick] (Vn) -- (Vnp);

        \draw[thick] (V1p) -- (U1p);
        \draw[thick] (V1p) -- (Ump);
        \draw[thick] (Vnp) -- (U1p);
        \draw[thick] (Vnp) -- (Ump);

    \end{tikzpicture}
    \caption{The Indu-Bala product $G_1 \blacktriangledown G_2$.}
    \label{Figure 6}
\end{figure}

Consider a Latin square $M=[m_{i,j}]$ of order $t=m+1$ as in Lemma \ref{Lemma 2.2}. 
Let $M'$ consists of the first $m$ rows and $n+1$ columns of $M$.
We define
    \[
    g_1(x)=
    \begin{cases}
    m_{i,j}, & x=u_i v_j, 1\leq i\leq m, 1\leq j\leq n\\[2mm]
    m_{i,n+1}, & x=u_i, 1\leq i\leq m.
    \end{cases}
    \]
Consider the complete bipartite graph $B(H_1)$ with bipartitions $V(G_1)$ and $V(G_2)$, which is a subgraph of $H_1$. Let $C=\{1,...,m+1\}$ be the set of entries of $M$. 

\begin{claim}\label{Claim 8}
{\em The following holds:
\begin{enumerate}
    \item No two adjacent edges in $B(H_1)$ receive the same color. 
    \item Each vertex $u_i$ receives a color distinct from its incident edges in $B(H_1)$. 
    \item 
    $D_{B(H_1)}(v_i)\neq D_{B(H_1)}(v_j)$
    and
    $C_{B(H_1)}(u_i)\neq C_{B(H_1)}(u_j)$ for any  $i\neq j$.
\end{enumerate}
}
\end{claim}

\textit{Proof:}
(1) and (2) hold since each element of $M$
occurs exactly once in each row and exactly once in each column of $M$ and (3) follows from Lemma \ref{Lemma 2.2}.

Let $D=\{m+2, m+3,...,m+\Delta(G_{2})+3\}$ be a set of $\Delta(G_2)+2$ colors.
Since $G_2$ satisfies the TCC, there exists a proper total coloring $g_2:V(G_2)\cup E(G_2)\rightarrow D$ of $G_2$.
Let $g_3:E(G_1)\rightarrow D$ be a proper edge coloring of $G_1$. This is possible since $\chi'(G_1)\leq \Delta(G_1)+1<\Delta(G_2)+2$.\\
We define a total coloring
$g:V(H_1)\cup E(H_1)\rightarrow C\cup D$ by
\[
g(x)=
\begin{cases}
g_1(x), & \text{if } x\in V(G_1)\cup E(B(H_1)),\\
g_2(x), & \text{if } x\in V(G_2)\cup E(G_2),\\
g_3(x), & \text{if } x\in E(G_1).
\end{cases}
\]
Now, we color the elements of $H_2$.
Define $g_1'$ using the elements of $M'$ as follows:
\[
g_1'(x)=
\begin{cases}
m_{i,j+1}, & x=u_i'v_j', 1\leq i\leq m, 1\leq j\leq n\\[2mm]
m_{i,n+2}, & x=u_i', 1\leq i\leq m.
\end{cases}
\]
Let $\sigma$ be a cyclic permutation of all the colors in $D$, so that $\sigma(d)\neq d$ for any $d\in D$. 
Let $g_2':V(G_2')\cup E(G_2')\rightarrow D$ be the total coloring of $G_2'$  defined by 
\[
g_2'(x)=
\begin{cases}
\sigma(g_2(v_i)), & \text{if }x=v'_i,\\[1mm]
\sigma(g_2(v_iv_j)), & \text{if }x=v_i'v_j'.
\end{cases}
\]
The definition of $g_2'$ ensures that $g_2'(v'_i)\neq g_2(v_i)$ for each $v_i\in V(G_2)$ and $v'_i\in V(G'_2)$ where $1\leq i\leq n$.
Similar to the coloring $g_3$, let $g_3': E(G_1')\rightarrow D$ be a proper edge coloring  of $G_1'$. 
We define a total coloring
$g':V(H_2)\cup E(H_2)\rightarrow C\cup D$ by
\[
g'(x)=
\begin{cases}
g'_1(x), & \text{if } x\in V(G'_1)\cup E(B(H_2)),\\
g'_2(x), & \text{if } x\in V(G'_2)\cup E(G'_2),\\
g'_3(x), & \text{if } x\in E(G'_1).
\end{cases}
\]
We use a new color $c$ to color all the edges $\{v_iv_i':1\le i\leq n\}$.\\
Define a total coloring $h : E(G)\cup V(G)\rightarrow C \cup D \cup \{c\}$ of $G$ by
\[h(x)=
\begin{cases}
g(x), & \text{if } x \text{ is an element of } H_1,\\[1ex]
g'(x), & \text{if } x \text{ is an element of } H_2,\\[1ex]
c, & \text{if } x \in \{v_iv_i' : 1 \leq i \leq n\,\}.
\end{cases}
\]
It is clear that \(h\) is a proper total coloring of $G$ and $|C\cup D\cup \{c\}|=\left(\Delta(G_2)+m+1\right)+3$.

\begin{claim}\label{Claim 9}
{\em $D_{B(H_1)}(v_j)\neq D_{B(H_2)}(v_j')$ for each $1\leq j\leq n$.}
\end{claim}

\textit{Proof:}
The definition of $g_1'$ ensures that if $B(H_i)$ is the bipartite induced subgraph of $H_i$ $(i=1,2)$, then $D_{B(H_1)}(v_j)=\{m_{1,j},...,m_{m,j}\}\neq \{m_{1,j+1},...,m_{m,j+1}\}= D_{B(H_2)}(v_j')$ by Lemma \ref{Lemma 2.2}.    

\begin{claim}\label{Claim 10}
{\em The coloring $h$ is an AVD-total coloring of $G$.}
\end{claim}

\textit{Proof:}
Let $uv \in E(G)$ be an arbitrary edge. 

\textbf{Case (i):}
Suppose $u=u_i$ and $v=u_j$ such that $uv\in E(G_1)$.
Then for $x\in \{i,j\}$,
\begin{center}
    $C_G(u_x)=C_{B(H_1)}(u_x)\cup D_{G_1}(u_x)$,
\end{center}
where $C_{B(H_1)}(u_x)\subseteq C$ and $D_{G_1}(u_x)\subseteq D$. Moreover, $C_{B(H_1)}(u_i)\neq C_{B(H_1)}(u_j)$ by Lemma \ref{Lemma 2.2}.
Since $C\cap D=\emptyset$, we obtain $C_{G}(u)=C_G(u_i)\neq C_G(u_j)=C_{G}(v)$.
Similarly, if $uv\in E(G_1')$ then $C_G(u)\neq C_G(v)$.

\textbf{Case (ii):} 
Suppose $u=u_i$ and $v=v_j$ such that $uv\in E(H_1)$. 
Then $|C_G(u_i)\cap C|=n+1$ and
$|C_G(v_j)\cap C|=m$.
Since $m>n+1$, we obtain
$C_G(u_i)\neq C_G(v_j)$.
Similarly, if $u=u'_i$ and $v=v'_j$ such that $u'_iv'_j\in E(H_2)$, then $C_G(u'_i)\neq C_{G}(v'_j)$.     

\textbf{Case (iii):} 
Let $u=v_i, v=v_j$ such that $v_iv_j\in E(G_2)$. The following holds:
\begin{enumerate}
    \item $D_{B(H_1)}(v_i)\neq D_{B(H_1)}(v_j)$ by Lemma \ref{Lemma 2.2},
    
    \item $C_{G}(v_x)=D_{B(H_1)}(v_x)\cup h(v_xv_x')\cup C_{G_2}(v_x)$ for $x\in \{i,j\}$, 
    
    \item $C 
    \cap
    \left\{
    h(v_xv_x')\cup C_{G_2}(v_x)
    \right\}
    =\emptyset$ and $D_{B(H_1)}(v_x)\subseteq C$ for $x\in \{i,j\}$.
\end{enumerate}
Thus, $C_{G}(v_i)\neq C_{G}(v_j)$.
Similarly, if $u=v'_i$, $v=v'_j$, and $uv\in E(G'_2)$ then $C_{G}(v'_i)\neq C_{G}(v'_j)$.

\textbf{Case (iv):}
Let $u=v_i$, $v=v_i'$ and $v_iv'_i\in E(G)$.
Then, the following holds:
\begin{enumerate}
    \item $D_{B(H_1)}(v_i) \neq D_{B(H_2)} (v'_i)$ by Claim \ref{Claim 9},
    
    \item $C_G(v_i)=D_{B(H_1)}(v_i)\cup h(v_i v_i') \cup C_{G_2}(v_i)$,
    
    \item $C_G(v'_i)=D_{B(H_2)}(v'_i)\cup h(v_i v_i') \cup C_{G'_2}(v'_i)$,
    
    \item $D_{B(H_1)}(v_i), D_{B(H_2)}(v'_i)\subseteq C$,

    \item 
    $C \cap \left\{ h(v_i v_i') \cup C_{G_2}(v_i) \right\} = \emptyset$ and $C \cap \left\{ h(v_i v_i') \cup C_{G'_2}(v'_i) \right\} = \emptyset$.
\end{enumerate}

Consequently, $C_G(v_i) \neq C_G(v_i')$. 
\end{proof}

\textbf{Declarations}

\textbf{Conflict of interest:} The authors declare that they have no conflict of interest.\\

\textbf{Acknowledgment:} The first author thanks Amrita Vishwa Vidyapeetham, Coimbatore, India for the hospitality during his research visit. 

\end{document}